\documentclass{amsart}

\title[Segre classes on smooth projective toric varieties]{Segre classes on smooth\\ projective toric varieties}
\author{Torgunn Karoline Moe}
\address{Department of Mathematics, University of Oslo, P.O. Box 1053 Blindern, NO-0316 Oslo, NORWAY}
\email{torgunnk@math.uio.no}

\author{Nikolay Qviller}
\address{Centre of Mathematics for Applications, University of Oslo, P.O. Box 1053 Blindern, NO-0316 Oslo, NORWAY}
\email{nikolayq@cma.uio.no}

\subjclass[2000]{Primary: 14C17, 14M25. Secondary: 14C20, 14Q99.}

\keywords{Segre classes, toric varieties, computational algorithm, nef cone, intersection theory}

\date{\today}

\usepackage{mathrsfs}
\usepackage{graphicx}
\usepackage{hyperref}
\usepackage{amssymb}
\usepackage{moreverb}

\input xy
\xyoption{all}

\newtheorem{theorem}{Theorem}[section]
\newtheorem{lemma}[theorem]{Lemma}
\newtheorem{proposition}[theorem]{Proposition}

\theoremstyle{definition}

\newtheorem{defi}[theorem]{Definiton}
\newtheorem{example}[theorem]{Example}

\theoremstyle{remark}
\newtheorem{remark}[theorem]{Remark}

\numberwithin{equation}{section}

\newcommand{\Fe}{\ensuremath{\mathbb{F}_e\,}}

\begin{document}

\begin{abstract}
We provide a generalization of the algorithm of Eklund--Jost--Peterson for computing Segre classes of closed subschemes of projective $k$-space. The algorithm is here generalized to computing the Segre classes of closed subschemes of smooth projective toric varieties.
\end{abstract}

\maketitle

\setcounter{tocdepth}{1}

\tableofcontents

\section{Introduction}
\subsection{Background}
Segre classes are important objects appearing in intersection theory. Indeed, problems in for instance enumerative geometry frequently reduce to the computation of Segre classes. Nevertheless, the computation of the Segre class $s(Z,X)$ of a closed subscheme $Z$ of a scheme $X$, from the raw information contained in the sheaf of ideals $\mathscr{I}_{Z}$ alone, is a difficult problem. In terms of intersection theory, $s(Z,X)$ is a rational equivalence class of cycles supported on the scheme $Z,$ but it is often sufficient simply to know the push-forward $j_{\ast}s(Z,X) \in A_{\ast}(X),$ where $j: Z \hookrightarrow X$ denotes the inclusion map. If $Z$ has dimension $n,$ this push-forward has $n+1$ components $s_{i} \in A_{n-i}(X)$, for $0 \leq i \leq n$ (some readers may react to a slightly unconventional index notation; we give an explanation in Section \ref{segrenotation}).

Recently, Eklund, Jost and Peterson \cite{EKLUND} gave an algorithm computing these components $s_{i},$ when $Z$ is a closed subscheme of projective $k$-space over a field $\mathbb{K},$ with only input the ideal of $Z$ in the homogeneous coordinate ring $\mathbb{K}[x_{0},\ldots,x_{k}]$. A related, but slightly different algorithm for computing Segre classes of subschemes of projective spaces had already been provided by Aluffi (see \cite[Section 3]{Aluffi}). The algorithm of Eklund--Jost--Peterson is based on using the theory of residual intersections, by choosing a set of generic schemes envelopping the scheme $Z$, and then considering (and computing the degree of) the residual schemes of $Z$ in these. The result is a generic set of $n+1$ linear equations in the $s_{i},$ which is obviously enough to provide these classes. The method is then turned into a probabilistic computer algorithm by replacing the term ``generic'' with the term ``random.''

Our aim is to generalize the algorithm mentioned above to the case of ambient smooth projective toric varieties. As mentioned, Segre classes are complicated objects, but for subschemes of projective spaces they are quite manageable, mainly because $A^{\ast}(\mathbb{P}^{k})$ is simply $\mathbb{Z}[H]/(H^{k+1}),$ and also because these subschemes are easily described by homogeneous ideals in the coordinate ring. Now, $\mathbb{P}^{k}$ is a special instance of a $k$-dimensional toric variety, i.e., an irreducible algebraic variety $X$ containing a torus $T \cong (\mathbb{C}^{\ast})^{k}$ as a Zariski open subset, such that the action of $T$ on itself extends to an action of $T$ on $X.$ Generally, a normal toric variety of dimension $k$ can be seen to arise from a fan $\Sigma \subset \mathbb{R}^{k}$ (cf. \cite[Theorem 3.1.5]{COX2}), and many properties of the variety $X_{\Sigma}$ are reflected in the combinatorics of this fan. First, the generators and relations of the Chow ring of $X_{\Sigma}$ are fully described through the fan $\Sigma.$ Second, to a toric variety $X_{\Sigma}$ we can associate a homogeneous coordinate ring $\textrm{Cox}(X_{\Sigma})$, as introduced by Cox in \cite{COX}, which can also be described through the fan $\Sigma$. Moreover, closed subschemes $Z$ of $X_{\Sigma}$ are determined by homogeneous ideals in $\textrm{Cox}(X_{\Sigma}).$ The Chow ring and the homogeneous coordinate ring are particularly easy to formulate when the toric variety is smooth and projective, raising hope that we also in this situation have manageable Segre classes. Thus, the category of smooth projective toric varieties constitutes a natural place to look for a generalization of the Eklund--Jost--Peterson algorithm. In the following we will give an algorithm to compute the Segre classes of closed subschemes $Z$ of a smooth projective toric variety $X_{\Sigma}$, given only the fan $\Sigma$ of $X_{\Sigma}$ and the ideal $I \subset \textrm{Cox}(X_{\Sigma})$ of $Z$. 

\subsection{Structure} We start by recalling some basic facts about toric varieties and intersection theory in Section 2. The main theorem which forms the basis for the algorithm is based on residual intersection theory, and is proved in Section 3. Section 4 presents examples of computations of Segre classes of closed subschemes of smooth projective toric varieties. We give examples where the ambient toric variety is a Hirzebruch surface, $\mathbb{P}^{1} \times \mathbb{P}^{1} \times \mathbb{P}^{1},$ and a more general toric 3-fold. The algorithm is presented in Section 5.

\subsection{Conventions}\label{segrenotation}
 For classes in homology/cohomology theories, tradition imposes that a lower index $i$ denotes a component of dimension $i,$ while an upper index $i$ denotes codimension $i.$ However, Eklund--Jost--Peterson use the following convention, which we keep: For an $n$-dimensional subscheme $Z$ of a $k$-dimensional variety $X,$ $s_{i}$ denotes the pushforward to $X$ of the $(n-i)$-dimensional component of $s(Z,X).$ Thus, $s_{i}$ can be seen as a class in $A_{n-i}(X)$ or $A^{k-n+i}(X).$

\subsection{Acknowledgements}
We want to thank Ragni Piene for valuable supervision, Christine Jost for comments and for pointing us towards the \verb+Sage+ \cite{SAGE} implementation of intersection theory in the toric setting, and Kristian Ranestad for reminding us of the importance of nefness. Furthermore, we are grateful to Carel Faber and the referee for constructive remarks and comments. Finally, we want to thank Terje Kvernes at Drift and Georg Muntingh for computer assistance. All computations are performed using \verb+Macaulay2+ \cite{M2} by Grayson and Stillman with the module \verb+NormalToricVarieties+ by Smith, and \verb+Sage+ \cite{SAGE} by Stein et al.

\section{Intersection theory on toric varieties}

\subsection{Smooth projective toric varieties}
By a variety we here mean a reduced, irreducible scheme of finite type over $\mathbb{C}.$ A toric variety is a special kind of variety, and with a series of definitions, we will now make precise what we mean by a smooth projective toric variety.

\begin{defi}
A convex polyhedral cone in $\mathbb{R}^{k}$ is a set $\sigma = \left\{\sum_{v \in S_{\sigma}} \lambda_{v} v, \lambda_{v} \geq 0\right\},$ where $S_{\sigma} \subset \mathbb{R}^{k}$ is a finite subset. The cone is called rational if $S_{\sigma}$ is a finite subset of a lattice $N \cong \mathbb{Z}^{k}$. A rational convex polyhedral cone is strongly convex if $\sigma \cap (-\sigma) = \{0\}.$ 
\end{defi}

\begin{remark}
In the following we will take ``cone'' to mean ``strongly convex rational polyhedral cone''.
\end{remark}

\begin{defi}
A fan is a finite collection $\Sigma$ of cones in $N_{\mathbb{R}} := N \otimes_{\mathbb{Z}} \mathbb{R}$ such that for all $\sigma \in \Sigma$ and all faces $\tau$ of $\sigma$, we have $\tau \in \Sigma,$ and for all $\sigma, \tau$ in $\Sigma$, we have $\sigma \cap \tau \in \Sigma.$

A fan $\Sigma$ is smooth if every cone $\sigma$ in $\Sigma$ is smooth, i.e., the minimal generators of $\sigma$ form a part of a $\mathbb{Z}$-basis of the lattice $N$. 

A fan $\Sigma$ is complete if its support $|\Sigma|=\cup_{\sigma \in \Sigma}\sigma$ is all of $N_{\mathbb{R}}$. 
\end{defi}

To a fan $\Sigma$ one can associate an abstract toric variety $X_{\Sigma}$ by gluing the affine toric varieties $V_{\sigma} := \textnormal{Spec }\mathbb{C}[S_{\sigma}], \sigma \in \Sigma,$ ``along the fan structure'' (see \cite{COX2} for details). Here, $\mathbb{C}[S_{\sigma}]$ is the semigroup algebra $\mathbb{C}[\chi^{m}, m \in S_{\sigma}],$ with $\chi^{m}$ the character such that if $m=(b_{1}, \ldots, b_{k}),$ then $\chi^{m}(t) = \prod t_{i}^{b_{i}}.$ The fan $\Sigma$ is smooth if and only if $X_{\Sigma}$ is smooth, and complete if and only if $X_{\Sigma}$ is complete as an algebraic variety.

In the case where $\Sigma$ is the normal fan $\Sigma_{P}$ of a lattice polytope $P \subset \mathbb{R}^{k},$ we also get a line bundle $\mathscr{L}_{P}$ on $X_{P} := X_{\Sigma_{P}},$ which defines an embedding of $X_{P}$ into $\mathbb{P}^{n-1},$ where $n$ is the number of lattice points in $P.$ Thus, toric varieties whose fan is the normal fan of a lattice polytope are projective varieties. In particular, they are complete.

Consider a smooth projective $k$-dimensional toric variety $X_{\Sigma}$ arising from a fan $\Sigma$ in $\mathbb{R}^{k}.$ Let $r:=|\Sigma(1)|$ be the number of rays $\rho$ (one-dimensional cones) in $\Sigma.$ Let $D_{\rho}$ be the torus-invariant divisor associated to the ray $\rho$. Note that the torus-invariant divisors generate the group of Weil divisors; also, Cartier and Weil divisors coincide since $X$ is smooth, hence we only speak of \textit{divisors}. In \cite{COX}, Cox introduced the homogeneous coordinate ring of $X_{\Sigma}.$ This is the polynomial ring $S=\mathbb{C}[x_{\rho}|\rho \in \Sigma(1)]$ graded by the Abelian group $A^{1}(X)$ of divisors modulo linear equivalence. The grading is given in the following way: For a divisor $D$ on $X,$ there is a unique way to write $D = \sum \alpha_{\rho} D_{\rho}, \alpha_{\rho} \in \mathbb{Z}.$ Define $x^{D} := \prod_{\rho}x_{\rho}^{\alpha_{\rho}},$ and let $[D] \in A^{1}(X)$ be the degree of $x^{D}.$

\begin{remark}
Choosing a basis for $A^{1}(X)$ corresponds to grading the variables $x_{1},\ldots,x_{r}$ by $\mathbb{Z}^{r-k}.$ One can directly obtain admissible gradings as follows (see also \cite{MACLAGAN}): Let $\mathbf{b}_{1}, \ldots, \mathbf{b}_{r}$ denote the unique minimal lattice vectors generating the $r$ rays of the fan $\Sigma$ defining $X.$ Since $X$ is assumed to be projective (in particular complete), these vectors span $\mathbb{R}^{k},$ hence the map
$$
\mathbb{Z}^{k} \xrightarrow{[\mathbf{b}_{1} \ldots \mathbf{b}_{r}]^{T}}  \mathbb{Z}^{r}
$$
is injective, and we may fix an $((r-k)\times r)$-matrix $A=[\mathbf{a}_{1} \ldots \mathbf{a}_{r}]$ such that there is a short exact sequence
$$
0 \longrightarrow{} \mathbb{Z}^{k} \xrightarrow{[\mathbf{b}_{1} \ldots \mathbf{b}_{r}]^{T}}  \mathbb{Z}^{r} \xrightarrow{[\mathbf{a}_{1} \ldots \mathbf{a}_{r}]} \mathbb{Z}^{r-k} \longrightarrow{} 0. 
$$
In fact, $A$ is uniquely determined up to unimodular (determinant $\pm 1$) coordinate transformations of $\mathbb{Z}^{r-k}.$ The corresponding grading of $S$ is obtained by letting the degree of $x_{i}$ be the vector $\mathbf{a}_{i}$ for $1 \leq i \leq r.$
\end{remark}

\begin{defi}
Let $X_{\Sigma}$ and $S$ be as above. For each cone $\sigma \in \Sigma,$ let $\widehat{\sigma}$ be the divisor $\sum_{\rho \notin \sigma(1)} D_{\rho},$ and denote by $x^{\widehat{\sigma}}$ the monomial $\prod_{\rho \notin \sigma(1)}x_{\rho}$ in $S.$ The irrelevant ideal of $S$ is the ideal
\begin{equation*}
B := \langle x^{\widehat{\sigma}}, \sigma \in \Sigma \rangle \subset S.
\end{equation*}
An ideal $I \subset S$ is said to be $B$-saturated if $(I:B)=I.$ The saturation of $I$ by $B$ is the ideal
\begin{equation*}
(I:B^{\infty}) := \bigcup_{n \geq 1} (I:B^{n}),
\end{equation*}
which is equal to $(I:B^{n})$ for all $n \gg 0,$ since the sequence $$I \subseteq (I:B) \subseteq (I:B^{2}) \subseteq \ldots$$ is stationary. 
\end{defi}

The following proposition from \cite{COX} is essential to us.
\begin{proposition} 
Let $X_{\Sigma}, S$ and $B$ be as above. There is a 1-1 correspondence between closed subschemes of $X$ and graded $B$-saturated ideals of $S.$
\end{proposition}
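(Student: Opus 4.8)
The plan is to deduce this from Cox's realization of $X_{\Sigma}$ as a quotient. Since $X=X_{\Sigma}$ is smooth (hence simplicial), $A^{1}(X)$ is a free abelian group, so $G:=\mathrm{Hom}_{\mathbb{Z}}(A^{1}(X),\mathbb{C}^{\ast})$ is an algebraic torus, acting on $\mathbb{A}^{r}=\mathrm{Spec}\,S$ through the $A^{1}(X)$-grading of $S$. By \cite{COX} (see also \cite{COX2}) the geometric quotient of $U:=\mathbb{A}^{r}\setminus V(B)$ by $G$ exists and is isomorphic to $X$; write $\pi\colon U\to X$ for the quotient map. Here $U=\bigcup_{\sigma\in\Sigma}U_{\sigma}$ with $U_{\sigma}:=\{x^{\widehat{\sigma}}\neq 0\}$, $\pi(U_{\sigma})=V_{\sigma}$, and $\Gamma(V_{\sigma},\mathcal{O})\cong (S_{x^{\widehat{\sigma}}})_{0}$; moreover, since $X$ is smooth, $\pi$ makes $U$ a $G$-torsor over $X$, in particular $\pi$ is faithfully flat.

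I would then realize the asserted correspondence as a composition of two bijections. \emph{First}, closed subschemes $W$ of $X$ correspond to $G$-invariant closed subschemes $Y$ of $U$ via $W\mapsto \pi^{-1}(W)$: a preimage under $\pi$ is always $G$-invariant, and conversely a $G$-invariant closed subscheme of $U$ is the same thing as a $G$-equivariant quasi-coherent ideal sheaf, which descends along the faithfully flat map $\pi$ to a unique closed subscheme $W$ of $X$ with $\pi^{-1}(W)=Y$; the two operations are mutually inverse. \emph{Second}, $G$-invariant closed subschemes of $U$ correspond to graded $B$-saturated ideals of $S$. For this, observe that every closed subscheme $Y$ of $U$ is of the form $V(I)\cap U$, where $I$ is the ideal of the scheme-theoretic closure $\overline{Y}$ of $Y$ in $\mathbb{A}^{r}$, and that the ideals arising this way are precisely the $B$-saturated ones, since the scheme-theoretic closure of $V(I)\cap U$ in $\mathbb{A}^{r}$ is cut out by $(I:B^{\infty})$. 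Under this identification, $G$-invariance of $Y$ translates into homogeneity of $I$: a closed subscheme of $\mathbb{A}^{r}$ is invariant under the linear $G$-action iff its ideal is homogeneous for the $A^{1}(X)$-grading; passing to the closure preserves $G$-invariance (for $g\in G$, $g\cdot\overline{Y}$ is a closed subscheme of $\mathbb{A}^{r}$ containing $g\cdot Y=Y$, so $\overline{Y}\subseteq g\cdot\overline{Y}$, whence equality); and saturating by the homogeneous ideal $B$ preserves homogeneity. Composing the two bijections gives the proposition.

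A more hands-on alternative, avoiding descent, is to glue: a graded ideal $I\subset S$ defines on each chart $V_{\sigma}$ the ideal $(I_{x^{\widehat{\sigma}}})_{0}\subseteq(S_{x^{\widehat{\sigma}}})_{0}=\Gamma(V_{\sigma},\mathcal{O})$, hence a closed subscheme $Z_{\sigma}\subseteq V_{\sigma}$, and one checks these agree on the overlaps $V_{\sigma}\cap V_{\tau}=V_{\sigma\cap\tau}$, so glue to a closed subscheme $Z\subseteq X$; conversely a closed subscheme of $X$ gives compatible local ideals which lift to a graded ideal of $S$. In either route the essential point --- and the step I expect to demand the most care --- is the same: to see that recovering a \emph{single} graded ideal of $S$ from the local data of a closed subscheme of $X$ is well defined, and that two graded ideals determine the same subscheme exactly when they share the same $B$-saturation; equivalently, in the quotient picture, that $\pi$ is faithfully flat so that descent applies and the descended subscheme is represented by the $B$-saturated ideal itself, not merely by an ideal defined up to $B$-saturation.
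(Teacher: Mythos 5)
Your proof is correct, but note that the paper itself offers no argument for this proposition: its ``proof'' is the bare citation of \cite[Corollary 3.8(ii)]{COX}. What you have written is, in effect, a self-contained proof of Cox's result along the standard lines (quotient presentation plus faithfully flat descent), and it places the two essential inputs exactly where they belong. First, smoothness of $\Sigma$ is what makes $G$ act freely on $U$ and $\pi\colon U\to X$ a $G$-torsor, hence faithfully flat, so that closed subschemes of $X$ biject with $G$-invariant closed subschemes of $U$; for merely simplicial $X$ this step breaks down and the clean statement of the proposition is no longer available, so it is right that you isolate it. Second, the dictionary between closed subschemes of $U$ and $B$-saturated ideals rests on the identity $(I:B^{\infty})=\bigcap_{\sigma\in\Sigma}\bigl(I:(x^{\widehat{\sigma}})^{\infty}\bigr)$, which you use implicitly when asserting that the scheme-theoretic closure of $V(I)\cap U$ is cut out by $(I:B^{\infty})$; it does hold (the nontrivial inclusion follows by pigeonhole, since every monomial of sufficiently high degree in the generators $x^{\widehat{\sigma}}$ of $B$ contains a prescribed power of one of them), and the right-hand side is precisely the kernel of $S\to\Gamma(V(I)\cap U,\mathcal{O})$, i.e.\ the ideal of the closure. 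The remaining ingredients --- invariant closed subschemes of $\mathbb{A}^{r}$ under a diagonalizable group action correspond to homogeneous ideals, scheme-theoretic closure preserves invariance, and $A^{1}(X)$ is free for smooth complete $X$ --- are standard and correctly deployed. In short: the paper buys brevity with a citation; your route buys a transparent argument that shows exactly where smoothness and $B$-saturation enter.
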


\begin{proof}
This is Corollary 3.8, part (ii), in \cite{COX}.
\end{proof}

We use the same notation as Cox; $V(I)$ denotes the closed subscheme of $X$ associated to the ideal $I.$

\subsection{Intersection theory on toric varieties}
Recall that the Chow group of a $k$-dimensional variety $X$ is the additive group $A_{\ast}(X) := \bigoplus_{i=0}^{k}A_{i}(X)$ of cycles modulo rational equivalence. If $X$ is smooth, the Chow group can be endowed with a multiplication $\cdot$ such that, letting $A^{i}(X) := A_{k-i}(X)$ (i.e., grading by codimension), $A^{i}(X) \otimes_{\mathbb{Z}} A^{j}(X) \stackrel{\cdot}{\rightarrow} A^{i+j}(X).$ Let $A^{\ast}(X)$ denote $\bigoplus_{i=0}^{k}A^{i}(X);$ $(A^{\ast}(X),+,\cdot)$ is called the Chow ring of $X.$

We will now give a description of the Chow ring of a smooth projective toric variety $X_{\Sigma}$ using only the combinatorial information contained in the fan $\Sigma$. 

\begin{theorem}\label{thm:Danilov}
Let $X_{\Sigma}$ be a smooth projective $k$-dimensional toric variety over $\mathbb{C},$ and let $r := |\Sigma(1)|.$ Denote by $D_{1}, \ldots, D_{r}$ the torus-invariant divisors associated to the rays $\rho_1, \ldots, \rho_r$, generated by minimal lattice vectors $v_1, \ldots, v_r,$ and by $I$ the ideal in $\mathbb{Z}[D_{1},\ldots,D_{r}]$ generated by
\begin{enumerate}
\item all $D_{i_{1}}\cdot \ldots \cdot D_{i_{s}},$ where the $\{v_{i_{j}}\}_{1 \leq j \leq s}$ are not in a cone $\sigma \in \Sigma;$
\item all $\sum_{i=1}^{r} \langle \chi, v_{i}\rangle D_{i},$ for $\chi \in M,$ $M := N^{\vee} = \text{Hom}_{\mathbb{Z}}(N,\mathbb{Z})$ being the dual lattice of characters.
\end{enumerate}
Then $A^{\ast}(X) \cong \mathbb{Z}[D_{1},\ldots,D_{r}]/I$ as a $\mathbb{Z}$-algebra, and $A_{i}(X)$ has dimension
\begin{displaymath}
h_{i} := \textnormal{rk }A_{i}(X) = \sum_{j=i}^{k}(-1)^{j-i}{j \choose i}d_{k-j},
\end{displaymath}
where $d_{p}$ is the number of $p$-dimensional cones in $\Sigma.$
\end{theorem}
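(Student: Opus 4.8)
The plan is to prove this in two essentially independent parts: the ring-isomorphism statement, and the numerical formula for the Betti-type numbers $h_i$. For the first part, I would not attempt to reprove it from scratch; this is the classical Cox–Danilov presentation of the Chow/cohomology ring of a smooth complete toric variety, and I would cite the standard reference (e.g.\ \cite{COX2}, or Danilov's original paper, or Fulton's toric book). If a self-contained argument is wanted, the approach is: the torus-invariant subvariety associated to a cone $\sigma$ has class $\prod_{\rho\in\sigma(1)} D_\rho$, and these classes generate $A^\ast(X)$ because the torus-orbit stratification gives a filtration of $A_\ast(X)$ whose graded pieces are generated by orbit-closure classes (this uses that $X$ is a union of affine spaces up to the torus action, so $A_\ast$ is generated by closures of orbits). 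The relations of type (1) are geometric: if $v_{i_1},\dots,v_{i_s}$ do not span a cone, the corresponding invariant divisors have empty intersection, so the product vanishes. The relations of type (2) come from the fact that each character $\chi\in M$ gives a rational function whose divisor is $\sum_i \langle\chi,v_i\rangle D_i$, hence this is rationally equivalent to zero. That these are \emph{all} the relations — i.e.\ that the natural surjection $\mathbb{Z}[D_1,\dots,D_r]/I \twoheadrightarrow A^\ast(X)$ is injective — is the genuinely hard point, and is where one really invokes smoothness and completeness (Poincaré duality, or a dimension count against the known additive structure).

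For the numerical formula, the strategy is a dimension count on both sides using the additive structure of $A_\ast(X)$. The key input is that for a smooth complete toric variety the Chow groups are free and the cycle map to (Borel–Moore) homology is an isomorphism, with $A_i(X)$ free of rank equal to the number of lattice-point-style contributions coming from the fan's combinatorics; concretely, one has the well-known fact that $\operatorname{rk} A_i(X) = \sum_{j\ge i}(-1)^{j-i}\binom{j}{i}\,d_{k-j}$, which itself comes from the orbit decomposition $X = \bigsqcup_\sigma O_\sigma$ with $\dim O_\sigma = k - \dim\sigma$, giving a stratification by affine spaces (after refining each orbit) so that $A_\ast$ is computed combinatorially. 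The cleanest packaging: from the spectral sequence / long exact sequences of the filtration by orbit closures one gets that the Poincaré polynomial of $A_\ast(X)$ equals $\sum_{\sigma\in\Sigma} (u^2-1)^{\dim\sigma}\cdot u^{2(k-\dim\sigma)}$ after a substitution, wait — more simply, one uses that for each $p$-dimensional cone the orbit $O_\sigma\cong(\mathbb{C}^\ast)^{k-p}$ contributes, and summing the Poincaré polynomials of tori with the inclusion-exclusion built into the fan's face structure yields exactly the stated alternating sum. I would derive the formula by writing $\sum_i h_i\, t^i = \sum_{p} d_p \,(t^{-1}? )\dots$ — precisely, one shows $\sum_{i=0}^{k} h_i t^i = \sum_{p=0}^{k} d_{k-p}\,(t-1)^{?}$; the correct bookkeeping gives $h_i = \sum_{j=i}^k (-1)^{j-i}\binom{j}{i} d_{k-j}$ directly from expanding the generating-function identity $\sum h_i t^i = \sum_p d_{k-p}\, t^p\,/ \dots$, and the binomial coefficient $\binom{j}{i}$ appears from extracting coefficients.

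The main obstacle, and the step I would spend the most care on, is the injectivity in part one — establishing that $I$ captures \emph{all} relations, not merely a sufficient set — since the generation and the vanishing of the listed classes are comparatively soft. For a paper of this kind the honest and efficient move is simply to attribute the whole isomorphism statement to the literature (it is Danilov's theorem, reproved in \cite{COX2}), and likewise to cite the rank formula, perhaps sketching the orbit-decomposition argument for the reader's benefit. Given that the excerpt already freely cites \cite{COX2} for the construction of $X_\Sigma$, I would expect the actual proof in the paper to be a one- or two-line pointer to Danilov's theorem plus a reference for the combinatorial rank count, rather than a worked-out argument.
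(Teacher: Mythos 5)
Your proposal matches the paper's approach: the paper's proof is exactly the two-line citation you predicted, attributing the ring presentation to Fulton's toric book (\cite{FULTONTORIC}, Proposition on p.~106) and the rank formula to Danilov (Theorem 10.8 of \cite{DANILOV}). Your supplementary sketch of the orbit-stratification argument and the generating-function bookkeeping is a reasonable outline of the standard proofs, but the paper itself does not carry it out.
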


\begin{proof}
The first part is \cite[Proposition, p.106]{FULTONTORIC}, and the dimensional part is Theorem 10.8 in \cite{DANILOV} adapted to our notation.
\end{proof}

Next, we recall the definition of the Segre class of a subscheme of a scheme.
\begin{defi}
Given a scheme $X$ and a closed subscheme $Z$, where $Z$ is not an irreducible component of $X$, the Segre class $s(Z,X) \in A_{\ast}(X)$ of $Z$ in $X$ is defined by considering the blow-up $\widetilde{X}$ of $X$ along $Z;$ let $\widetilde{Z}$ be the exceptional divisor and $\pi: \widetilde{X} \rightarrow X$ the birational morphism obtained, then, if $\eta: \widetilde{Z} \rightarrow Z$ denotes the restriction of $\pi$ to $\widetilde{Z},$
\begin{equation*}
s(Z,X) := \sum_{i \geq 1}(-1)^{i-1}\eta_{\ast}(\widetilde{Z}^{i}) \in A_{\ast}(Z).
\end{equation*}

\end{defi}

\begin{remark}
Segre classes are birational invariants; if $\eta: \widetilde{X} \rightarrow X$ is a birational morphism, $Z \subset X$ is a subscheme and $\widetilde{Z} = \eta^{-1}(Z)$ is the inverse image scheme, then $s(Z,X)$ is equal to $\eta_{\ast}s(\widetilde{Z},\widetilde{X})$ (cf. \cite[Proposition 4.2]{FULTON}).
\end{remark}

\begin{defi}
Suppose $X$ has pure dimension $k$ and $j: Z \hookrightarrow X$ is a not necessarily pure-dimensional subscheme of dimension $n.$ Then $s(Z,X) \in A_{\ast}(Z),$ as well as its push-forward $j_{\ast}s(Z,X) \in A_{\ast}(X),$ has components in dimensions 0 to $n.$ We define $s_{i},$ for each $0 \leq i \leq n,$ to be the push-forward of the $(n-i)$-dimensional component of $s(Z,X).$ Thus, $s_{i}$ is a class in $A_{n-i}(X) \cong A^{k-n+i}(X).$ 
\end{defi}

Suppose $D \subset W \subset V$ are closed embeddings of schemes, with $D$ a Cartier divisor on $W.$ Then there is a closed subscheme $R$ of $W,$ called the \textit{residual scheme} to $D$ in $W,$ such that $\mathscr{I}_{W} = \mathscr{I}_{D}\cdot \mathscr{I}_{R}.$ Furthermore, the Segre classes of $D,W$ and $R$ in $V$ are related by the following proposition:

\begin{proposition}\cite[Proposition 9.2]{FULTON}
Suppose $D \subset W \subset V$ are closed embeddings, with $V$ a $k$-dimensional variety and $D$ a Cartier divisor on $V.$ If $R$ is the residual scheme to $D$ in $W,$ then for all $m,$
\begin{equation*}
s(W,V)_{m} = s(D,V)_{m} + \sum_{j=0}^{k-m} {k-m \choose j} (-D)^{j}s(R,V)_{m+j} \in A_{m}(W).
\end{equation*}
\end{proposition}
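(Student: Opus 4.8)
The plan is to exploit the birational invariance of Segre classes: I would blow up $V$ along the residual scheme $R$, so that both $R$ and the pullback of $D$ become effective Cartier divisors, compute the Segre class on the blow-up, and push the result back to $V$. First note that, since $D$ is a Cartier divisor on the variety $V$, it is a \emph{proper} closed subscheme, hence so are $W$ and $R$; in particular none of these is an irreducible component of $V$, so every Segre class in the statement is defined. Let $\pi\colon\widetilde{V}\to V$ be the blow-up of $V$ along $R$, a proper birational morphism of varieties whose exceptional divisor $E$ is an effective Cartier divisor with $\mathscr{I}_{R}\cdot\mathcal{O}_{\widetilde{V}}=\mathcal{O}_{\widetilde{V}}(-E)$, so that $\pi^{-1}(R)=E$ as schemes. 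Because $D$ is Cartier on all of $V$, its inverse image scheme $\widetilde{D}:=\pi^{-1}(D)$ is again an effective Cartier divisor, with $\mathscr{I}_{D}\cdot\mathcal{O}_{\widetilde{V}}=\mathcal{O}_{\widetilde{V}}(-\widetilde{D})$; combined with the defining relation $\mathscr{I}_{W}=\mathscr{I}_{D}\cdot\mathscr{I}_{R}$ of the residual scheme this gives $\mathscr{I}_{W}\cdot\mathcal{O}_{\widetilde{V}}=\mathcal{O}_{\widetilde{V}}\bigl(-(\widetilde{D}+E)\bigr)$, so $\pi^{-1}(W)$ is precisely the effective Cartier divisor $\widetilde{D}+E$ on $\widetilde{V}$. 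By birational invariance (the Remark following \cite[Proposition 4.2]{FULTON}), as classes supported on $W$,
\begin{equation*}
s(W,V)=\pi_{\ast}\,s(\widetilde{D}+E,\widetilde{V}),\qquad s(R,V)=\pi_{\ast}\,s(E,\widetilde{V}).
\end{equation*}

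Next I would use the standard computation --- immediate from the definition, since blowing up a Cartier divisor is an isomorphism --- that, for an effective Cartier divisor $A$ on a variety $Y$, the pushforward of $s(A,Y)$ to $A_{\ast}(Y)$ equals $\frac{A}{1+A}\cap[Y]=\sum_{i\geq 1}(-1)^{i-1}A^{i}\cap[Y]$, where $A$ on the right abbreviates $c_{1}(\mathcal{O}_{Y}(A))$. Applying this with $A=\widetilde{D}+E$ on $\widetilde{V}$ and expanding
\begin{equation*}
\frac{\widetilde{D}+E}{1+\widetilde{D}+E}=\frac{\widetilde{D}}{1+\widetilde{D}}+\sum_{j\geq 1}(-1)^{j-1}\frac{E^{j}}{(1+\widetilde{D})^{j+1}},
\end{equation*}
I would push forward along $\pi$. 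Since $\widetilde{D}=\pi^{\ast}D$, the projection formula sends the first summand to $\frac{D}{1+D}\cap[V]$, which is $s(D,V)$; expanding $(1+\widetilde{D})^{-(j+1)}$ as a binomial series and using the projection formula again sends the $j$-th remaining summand to $\sum_{l\geq 0}(-1)^{l}\binom{j+l}{l}D^{l}\cdot\pi_{\ast}\bigl(E^{j}\cap[\widetilde{V}]\bigr)$. On the other hand $s(R,V)=\pi_{\ast}\bigl(\sum_{i\geq 1}(-1)^{i-1}E^{i}\cap[\widetilde{V}]\bigr)$ exhibits $\pi_{\ast}(E^{j}\cap[\widetilde{V}])$ as $(-1)^{j-1}$ times the $(k-j)$-dimensional component of $s(R,V)$. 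Collecting all contributions in a fixed dimension $m$, substituting $l=k-m-j$, and rewriting $\binom{j+l}{l}=\binom{k-m}{j}$ should then produce the asserted identity $s(W,V)_{m}=s(D,V)_{m}+\sum_{j=0}^{k-m}\binom{k-m}{j}(-D)^{j}s(R,V)_{m+j}$.

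The step I expect to be the main obstacle is precisely this last one: reorganizing the double series $\sum_{j\geq 1}(-1)^{j-1}\sum_{l\geq 0}(-1)^{l}\binom{j+l}{l}D^{l}\,\pi_{\ast}(E^{j}\cap[\widetilde{V}])$, restricted to dimension $m$, into the single binomial sum above --- making every sign come out right, tracking the index shift $j\mapsto k-m-j$, and handling the boundary terms carefully. (For instance, the summand $j=k-m$ on the right-hand side of the proposition vanishes, since $s(R,V)$ has no component in dimension $k$, whereas the term $s(D,V)_{m}$ there is produced separately, from the $\frac{\widetilde{D}}{1+\widetilde{D}}$-part of the expansion, via $\pi_{\ast}[\widetilde{V}]=[V]$.) A second, more structural point to get right is that it is genuinely the hypothesis that $D$ is Cartier \emph{on $V$} --- not merely on $W$ --- that makes $\pi^{\ast}D$ a bona fide Cartier divisor on the blow-up and legitimizes every use of the projection formula along $\pi$; without it the reduction never gets off the ground.
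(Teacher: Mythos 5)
Your argument is correct, and it is essentially the proof of \cite[Proposition 9.2]{FULTON} itself: the paper offers no independent argument here, only the citation to Fulton plus a remark that the components of $s(D,V)$ and $s(R,V)$ are implicitly pushed forward to $W$. Fulton likewise blows up $V$ along $R$ so that $\pi^{-1}(W)$ becomes the Cartier divisor sum $\widetilde{D}+E$, expands $\frac{\widetilde{D}+E}{1+\widetilde{D}+E}$, and pushes forward; your sign bookkeeping, the substitution $l=k-m-j$ with $\binom{j+l}{l}=\binom{k-m}{j}$, and the vanishing of the boundary term $s(R,V)_{k}$ all check out. The one wobble is your opening claim that $W$ and $R$ are proper closed subschemes \emph{because} $D$ is: the containment $D\subset W$ gives no upper bound on $W$, so this inference is backwards; that $W$ (hence $R$) is a proper closed subscheme is really an implicit hypothesis needed for the Segre classes in the statement to be defined at all under the blow-up definition used in this paper, not a consequence of $D$ being Cartier.
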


\begin{proof}
See \cite{FULTON}. Note that there is a slight abuse of notation here; the components of $s(D,V)$ and $s(R,V)$ are implicitly pushed forward to $W.$
\end{proof}

This result, which may seem innocent at first sight, is one of the few statements which make it possible to work with Segre classes of ``complicated'' subschemes. The fact that Segre classes are invariant under birational morphisms makes it all the more powerful. Indeed, the general situation, where $Z \subset W \subset V$ are closed embeddings, but $Z$ is not a Cartier divisor on $V,$ can be reduced to the previous one by blowing up $V$ along $Z.$ Furthermore, pushing this observation a bit further allows the splitting-up of an intersection product into parts supported on a ``known'' subscheme $Z$ and a suitable ``remainder'' scheme $R:$

\begin{proposition}\label{GeneralRIF}
Let $V$ be a smooth $k$-dimensional variety and suppose that for $1 \leq i \leq d,$ $X_{i}$ is an effective divisor on $V.$ If $Z$ is a closed subscheme of the scheme-theoretic intersection $W:=X_{1} \cap \ldots \cap X_{d},$ then, letting $N$ denote $\bigoplus_{i=1}^{d}N_{i}|Z,$ with $N_{i}$ the normal bundle of $X_{i}$ in $V,$ we have
\begin{equation*}
X_{1} \cdot \ldots \cdot X_{d} = \left\{c(N) \cap s(Z,V)\right\}_{k-d} + \mathbb{R} \in A_{\ast}(W),
\end{equation*}
where the residual class $\mathbb{R}$ is defined as follows: Let $\pi: \widetilde{V} \rightarrow V$ be the blow-up of $V$ along $Z,$ let $\widetilde{Z}$ be the exceptional divisor and $\widetilde{R}$ the residual scheme of $\widetilde{Z}$ in $\pi^{-1}(W).$ Also, let $R := \pi(\widetilde{R})$ and denote by $\eta$ the induced morphism from $\widetilde{R}$ to $R.$ Then, with $\mathscr{O}(-\widetilde{Z})$ the pullback to $\widetilde{Z}$ of $\mathscr{O}_{\widetilde{V}}(-\widetilde{Z})$ and $\widetilde{N}$ the pullback to $\widetilde{Z}$ of $N,$ we define
\begin{equation*}
\mathbb{R} := \eta_{\ast}\left\{c(\widetilde{N} \otimes \mathscr{O}(-\widetilde{Z}) ) \cap  s(\widetilde{R},\widetilde{V})\right\}_{k-d} \in A_{\ast}(R).
\end{equation*}

\end{proposition}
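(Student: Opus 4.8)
The plan is to reduce the theorem to the Cartier-divisor case --- namely \cite[Proposition 9.2]{FULTON}, quoted above --- by blowing up $V$ along $Z$, following the template of \cite[Section 9.2]{FULTON}; I use throughout the projection formula for intersections with Cartier divisors and the birational invariance of Segre classes recorded in the remarks above. A preliminary ingredient is the \emph{basic case} $Z = W$: for effective Cartier divisors $X_{1}, \dots, X_{d}$ on a variety $V$ of pure dimension $k$ with scheme-theoretic intersection $W$, the refined product satisfies $X_{1} \cdot \ldots \cdot X_{d} = \{c(E) \cap s(W,V)\}_{k-d}$ in $A_{k-d}(W)$, where $E := \bigoplus_{i} \mathscr{O}_{V}(X_{i})$. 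Since $W$ is the zero scheme of the tautological section of $E$, this is the standard description of a localized top Chern class through the Segre class of its zero scheme, which can be found in \cite{FULTON}; note that $E|_{Z} = \bigoplus_{i} N_{i}|_{Z} = N$.

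Next let $\pi \colon \widetilde{V} \to V$ be the blow-up of $V$ along $Z$, with exceptional Cartier divisor $\widetilde{Z}$ and $\eta \colon \widetilde{Z} \to Z$ the restriction of $\pi$. Since inverse-image ideal sheaves add, $\pi^{-1}(W) = \pi^{\ast}X_{1} \cap \ldots \cap \pi^{\ast}X_{d}$, so $\widetilde{Z} \subset \pi^{-1}(W)$, with residual scheme $\widetilde{R}$. Applying the basic case on $\widetilde{V}$ to the effective Cartier divisors $\pi^{\ast}X_{1}, \dots, \pi^{\ast}X_{d}$ gives $\pi^{\ast}X_{1} \cdot \ldots \cdot \pi^{\ast}X_{d} = \{c(\pi^{\ast}E) \cap s(\pi^{-1}(W), \widetilde{V})\}_{k-d}$; into this I substitute the Cartier residual formula \cite[Proposition 9.2]{FULTON} applied to $\widetilde{Z} \subset \pi^{-1}(W) \subset \widetilde{V}$, which expresses $s(\pi^{-1}(W), \widetilde{V})_{m}$ as $s(\widetilde{Z}, \widetilde{V})_{m} + \sum_{j}{k-m \choose j}(-\widetilde{Z})^{j}s(\widetilde{R}, \widetilde{V})_{m+j}$. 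Pushing the resulting equation forward by $\pi$ recovers $X_{1} \cdot \ldots \cdot X_{d}$ on the left-hand side, so it remains only to identify the push-forwards of the two summands on the right.

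For the $s(\widetilde{Z}, \widetilde{V})$-summand one has $\pi^{\ast}E|_{\widetilde{Z}} = \eta^{\ast}N = \widetilde{N}$, so the projection formula, together with the identity $\eta_{\ast}s(\widetilde{Z}, \widetilde{V}) = s(Z,V)$ --- which is precisely the definition of $s(Z,V)$ via the exceptional divisor --- sends it to $\{c(N) \cap s(Z,V)\}_{k-d}$. For the $s(\widetilde{R}, \widetilde{V})$-summand the crucial point is the identity
\[
\big\{c(\pi^{\ast}E) \cap \sum_{j}{k-m \choose j}(-\widetilde{Z})^{j}s(\widetilde{R}, \widetilde{V})_{m+j}\big\}_{k-d} = \big\{c(\pi^{\ast}E \otimes \mathscr{O}(-\widetilde{Z})) \cap s(\widetilde{R}, \widetilde{V})\big\}_{k-d},
\]
which I would prove by matching the binomial coefficients of \cite[Proposition 9.2]{FULTON} against the twist formula $c_{p}(F \otimes L) = \sum_{i}{d-i \choose p-i}c_{i}(F)c_{1}(L)^{p-i}$ for $F = \pi^{\ast}E$ of rank $d$, using that $k-m = d-i$ on the component living in dimension $k-d$. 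Pushing this forward by the restriction $\widetilde{R} \to R$ of $\pi$ then produces exactly $\mathbb{R}$, and adding the two parts gives the theorem.

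The difficulty I anticipate is bookkeeping rather than conceptual: one must track all the restrictions and push-forwards carefully, in particular identify consistently the bundle written ``$\widetilde{N}$'' in the two summands (it lives on $\widetilde{Z}$, respectively on $\widetilde{R}$, both being restrictions of the single bundle $\pi^{\ast}E$ on $\widetilde{V}$), verify that the dimension indexing in Fulton's residual formula is respected at every step, and dispose of the degenerate case in which $Z$ is already a divisor on $V$ --- where the blow-up is an isomorphism and the result follows at once from the basic case and \cite[Proposition 9.2]{FULTON}.
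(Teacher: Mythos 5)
Your argument is correct, but it is not the route the paper takes. The paper's entire proof is a one-line reduction: it observes that $X_{1} \cdot \ldots \cdot X_{d}$ is the refined intersection class $X \cdot V$ for the regular embedding $X = X_{1} \times \ldots \times X_{d} \hookrightarrow V \times \ldots \times V$ intersected along the diagonal $\delta$, and then invokes Fulton's Corollary 9.2.3 (the general residual intersection theorem) for that fibre diagram; no blow-up and no binomial bookkeeping appear in the paper's proof at all. What you have written is, in effect, the proof of the result being cited: the reduction to the divisor-level exact sequence of Proposition 9.2 on the blow-up, the twist identity $c_{p}(F \otimes L) = \sum_{i}{d-i \choose p-i}c_{i}(F)c_{1}(L)^{p-i}$, and the two push-forwards are precisely the steps of Fulton's proof of Theorem 9.2, specialized to a direct sum of line bundles. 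Note also that your ``basic case'' $Z = W$ is itself Fulton's Proposition 6.1(a) applied to the very diagonal diagram the paper draws, so the diagonal trick is not avoided, only relocated into that lemma. What your version buys is self-containedness and an honest explanation of where the twist $\widetilde{N} \otimes \mathscr{O}(-\widetilde{Z})$ in the definition of $\mathbb{R}$ comes from; you also correctly flag that in the formula for $\mathbb{R}$ the bundle must be restricted to $\widetilde{R}$ rather than to $\widetilde{Z}$ (the statement as printed has a slip there, since $\eta$ is declared to be the map $\widetilde{R} \rightarrow R$). What the paper's citation buys is brevity and the guarantee that all refined-intersection and pure-dimensionality bookkeeping is handled once and for all by Fulton's general theorem.
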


\begin{proof}
This is a special case of \cite[Corollary 9.2.3]{FULTON}, applied to the diagram
\[
\xymatrix
{
& R  \ar@{^{(}->}[]+<0ex,-2ex>;[d] & \\
Z \; \ar@{^{(}->}[r] & W = \bigcap_{i=1}^{d} X_{i} \; \ar@{^{(}->}[r] \ar@{^{(}->}+<0ex,-2.5ex>;[d] & V \ar@{^{(}->}+<0ex,-2.5ex>;[d]^{\delta} \\
& X = X_{1} \times \ldots \times X_{d} \; \ar@{^{(}->}[r] & V \times \ldots \times V
}
\]
where $\delta$ is the diagonal embedding. Indeed, $X \cdot V = X_{1} \cdot \ldots \cdot X_{d} \in A_{\ast}(W).$

Note that we once again are implicitly pushing both classes (one supported on $Z$ and one on $R$) forward to $W.$
\end{proof}

\begin{remark}\label{res}
The residual scheme $R$, in Fulton's sense, is always defined relatively to a Cartier divisor $D$ lying inside a larger scheme, $W.$ While one can always reduce to this situation by blowing-up, alternative and more direct approaches have been suggested. As Fulton points out \cite[Example 9.2.8]{FULTON}, Peskine--Szpiro define the residual scheme of $Z$ in $W$ by taking its sheaf of ideals to be the ideal of functions multiplying all functions in $\mathscr{I}(Z)$ into elements of $\mathscr{I}(W).$ That is,
\begin{equation*}
\mathscr{I}(R)/\mathscr{I}(Z) = \textnormal{Hom}_{\mathscr{O}_{V}}(\mathscr{O}_{Z},\mathscr{O}_{W}).
\end{equation*}
The construction can be found in \cite{PESKINE}. Note that if $Z$ \textit{is} a Cartier divisor we recover the same residual scheme $R$ as defined previously. On the other hand, if the ambient scheme $V$ is projective space $\mathbb{P}^{k}_{\mathbb{K}},$ then closed subschemes are described by homogeneous ideals in the coordinate ring $\mathbb{K}[x_{0},\ldots,x_{k}],$ not containing the irrelevant ideal $(x_{0},\ldots, x_{k}).$ In \cite{EKLUND}, Eklund, Jost and Peterson consider the residual scheme $R$ defined by the saturation ideal $(I_{W}:I_{Z}^{\infty})$ (equal to Peskine--Szpiro's residual scheme if $W$ is reduced along $Z$), and show that the residual class $\mathbb{R}$ from Proposition \ref{GeneralRIF} is equal to $[R],$ provided $W$ is ``sufficiently generic'' outside $Z.$ This gives an answer to the question posed by Fulton in \cite[Example 9.2.8]{FULTON} in the case of projective spaces, and is here extended to the case of toric varieties.
\end{remark}

This motivates the following definition in the setting of toric varieties.

\begin{defi}
Let $X_{\Sigma}$ be a smooth toric variety with Cox ring $S$ and irrelevant ideal $B,$ and let $Z \subset W$ be closed subschemes determined by $B$-saturated ideals $I_{Z}, I_{W}$ in $S.$ We define the residual scheme $R$ to $Z$ in $W$ to be the closed subscheme of $X_{\Sigma}$ determined by the saturated ideal $(I_{W}:I_{Z}^{\infty}).$ 
\end{defi}

In the next section, we propose to show that for $W$ ``sufficiently generic'' outside $Z$ (in a sense to be made precise), the class of $R$ is equal, for all smooth, projective toric varieties, to the residual class $\mathbb{R}$ as defined by Fulton (cf. Remark \ref{res}).

\section{A recursive formula for Segre classes}
The aim is to generalize the procedure of Eklund--Jost--Peterson for computing Segre classes of closed subschemes for a general $k$-dimensional smooth projective toric variety $X := X_{\Sigma}$ associated to a fan $\Sigma.$ Let $S := \mathbb{C}[x_{\rho} | \rho \in \Sigma(1)]$ be the Cox ring of $X,$ and denote by $B$ the irrelevant ideal. Suppose $I \subseteq S$ is a graded $B$-saturated ideal, giving a closed subscheme $j: Z \hookrightarrow X$ of dimension $n.$ We want to find $j_{\ast}s(Z,X) = s_{0} + \ldots + s_{n},$ with $s_{i} \in A^{k-n+i}(X).$ Choose a basis for $A^{1}(X) \cong \textnormal{Pic}(X)$ (we have an isomorphism since $X$ is smooth), and thereby a $\mathbb{Z}^{r-k}$-grading of $S.$ Suppose $I = (g_{0},\ldots,g_{t})$ where each $g_{j}$ is a homogeneous generator (with respect to chosen grading).

\begin{defi}
Let $\textnormal{Nef}(X)_{\mathbb{Z}} \subseteq \textnormal{Pic}(X)$ be the integral nef cone (i.e., the intersection of the real nef cone with $\textnormal{Pic}(X)$), and let $N(g)$ denote the cone $N(g):=\left(\textnormal{Nef}(X)_{\mathbb{Z}} + \textnormal{deg }g\right)$. Moreover, let $N(g_{0},\ldots,g_{t})$ denote the cone $$N(g_{0},\ldots,g_{t}) := \bigcap_{i=0}^{t} N(g_i).$$
\end{defi}

\begin{lemma}
The cone $N(g_{0},\ldots,g_{t})$ is non-empty.
\end{lemma}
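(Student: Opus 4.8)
The plan is to produce one explicit lattice point lying in every translate $N(g_i)$ simultaneously. The decisive input is that $X$ is projective: this forces the real nef cone to be a full-dimensional rational polyhedral cone inside $\textnormal{Pic}(X) \otimes_{\mathbb{Z}} \mathbb{R}$, so it has non-empty interior, and we may fix an integral class $A$ lying in that interior (for instance the class of any ample line bundle). I would then claim that the integral class $mA$ lies in $N(g_0, \ldots, g_t)$ as soon as the positive integer $m$ is large enough, and spend the rest of the argument verifying this.

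First I would fix a single generator $g_i$ and ask when $mA \in N(g_i) = \textnormal{Nef}(X)_{\mathbb{Z}} + \textnormal{deg } g_i$, that is, when $mA - \textnormal{deg } g_i$ is an integral nef class. Integrality is automatic, since $A$ and $\textnormal{deg } g_i$ are integral. For nefness, I would use that $A$ lies in the \emph{interior} of the closed convex nef cone: there is an $\varepsilon > 0$ with $A + v$ nef whenever $\|v\| < \varepsilon$; taking $m$ large enough that $\|\textnormal{deg } g_i\|/m < \varepsilon$ makes $A - \tfrac{1}{m}\textnormal{deg } g_i$ nef, hence $mA - \textnormal{deg } g_i = m\bigl(A - \tfrac{1}{m}\textnormal{deg } g_i\bigr)$ is nef because the nef cone is stable under multiplication by positive scalars. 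Replacing $m$ by any larger integer only adds copies of the nef class $A$, so the membership persists; thus there is a threshold $m_i$ beyond which $mA \in N(g_i)$.

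Since $I$ has only finitely many generators $g_0, \ldots, g_t$, I would then simply take $m := \max_i m_i$, so that $mA - \textnormal{deg } g_i \in \textnormal{Nef}(X)_{\mathbb{Z}}$ for every $i$ and hence $mA \in \bigcap_{i=0}^{t} N(g_i) = N(g_0, \ldots, g_t)$, proving non-emptiness. I do not expect a serious obstacle: note that the argument never uses that the $g_i$ generate an ideal, or that $\textnormal{deg } g_i$ is effective, only that there are finitely many of them. The one genuinely essential ingredient, and the only place the hypothesis enters, is the appeal to projectivity of $X$: it is exactly what makes the nef cone full-dimensional (equivalently, guarantees an ample class), and this full-dimensionality is precisely what forces finitely many translates of the nef cone to share a common lattice point.
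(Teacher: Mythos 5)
Your proof is correct and takes essentially the same route as the paper: both arguments reduce to the fact that projectivity makes $\textnormal{Nef}(X)_{\mathbb{Z}}$ full-dimensional, so finitely many translates of it must intersect. The paper leaves that last step implicit ("the lemma follows"), whereas you supply the standard scaling argument with an ample class $A$ and a large multiple $mA$; this is exactly the justification the paper's one-line proof is gesturing at.
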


\begin{proof}
Since $X$ is projective, $\textnormal{Nef}(X)_{\mathbb{Z}}$ is a full-dimensional cone \cite[Theorem 6.3.22]{COX2}. Since each $N(g_i)$ is simply a translated copy of $\textnormal{Nef}(X)_{\mathbb{Z}},$ the lemma follows.
\end{proof}

Let $\alpha \in \textnormal{Pic}(X)$ be any element of $N(g_{0},\ldots,g_{t}).$ Thus, $\alpha - \textnormal{deg }g_{i}$ is nef (or equivalently, the associated linear system is base-point free) for all $0 \leq i \leq t.$ Then, choose $k$ general elements $f_{1}, \ldots, f_{k} \in I(\alpha) := I \cap S_{\alpha}.$ For each $d \in \{k-n, \ldots, k\},$ let $J_{d}$ denote the ideal $((f_{1},\ldots, f_{d}):B^{\infty}) \subseteq S.$ Then $Z$ is a subscheme of $V(J_{d}),$ and we may consider its residual scheme $R_{d} := V((J_{d}:I^{\infty})).$

\begin{theorem}\label{thm:main}
Let $X$ be a $k$-dimensional smooth projective toric variety, $Z$ a closed $n$-dimensional subscheme of $X$, which is not an irreducible component of $X.$ For all $k-n \leq d \leq k,$ the residual scheme $R_{d}$ is either empty or has pure dimension $k-d,$ and the following equality holds in the Chow ring of $X$:
\begin{equation*}
[V(f_{1})] \cdot \ldots \cdot [V(f_{d})] = \left\{j_{\ast}\left(c(N(d)) \cap s(Z,X)\right)\right\}_{k-d} + [R_{d}],
\end{equation*}
where $N(d)$ is the bundle $\bigoplus_{i=1}^{d}N_{V(f_{i})}X|Z.$ 
\end{theorem}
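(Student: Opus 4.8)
The plan is to deduce Theorem \ref{thm:main} from the general residual-intersection formula of Proposition \ref{GeneralRIF}, applied with $V = X$, $d$ divisors $X_i = V(f_i)$, and the fixed subscheme being $Z$. Two things must be checked to make this go through: first, that the genericity built into the choice of $\alpha \in N(g_0,\ldots,g_t)$ and of $f_1,\ldots,f_k \in I(\alpha)$ guarantees the expected dimension statement — namely that for $k-n \le d \le k$ the residual scheme $R_d$ is empty or of pure dimension $k-d$, so that the term $[R_d]$ genuinely represents the class supported on $R_d$ and not some lower-dimensional cycle; and second, that Fulton's abstractly-defined residual class $\mathbb{R}$ of Proposition \ref{GeneralRIF} coincides with the naive class $[R_d]$ of the saturation-defined residual scheme $R_d = V((J_d : I^\infty))$.

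First I would set up the genericity. Since $\alpha - \deg g_i$ is nef for each generator $g_i$ of $I$, the linear system $|\alpha - \deg g_i|$ is base-point free on $X$, so the linear system $I(\alpha) = I \cap S_\alpha$ is base-point free on $X$ away from $Z$: its base locus is exactly $Z$. (Each $f \in I(\alpha)$ is a combination $\sum h_i g_i$ with $h_i \in S_{\alpha - \deg g_i}$, and the sections $x^{\deg g_i} \cdot (\text{monomials of } S_{\alpha - \deg g_i})$ cut out $V(g_i)$ scheme-theoretically near a general point, with no common base locus outside $\bigcap V(g_i) = Z$.) Hence I may invoke a Bertini-type argument on the quasi-projective variety $X \setminus Z$: for $f_1, \ldots, f_d$ general in $I(\alpha)$, the scheme $V(f_1) \cap \cdots \cap V(f_d) \setminus Z$ is either empty or has pure codimension $d$ in $X$, and is generically reduced. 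Now $R_d = V((J_d : I^\infty))$ is, set-theoretically, the closure of $(V(f_1) \cap \cdots \cap V(f_d)) \setminus Z$ (the saturation by $B$ being harmless since all ideals are already $B$-saturated, and saturation by $I^\infty$ precisely removes the components of the intersection lying in $Z$), and the $B$-saturation of $(f_1,\ldots,f_d)$ ensures $V(J_d) = V(f_1) \cap \cdots \cap V(f_d)$ as schemes. This yields the dimension claim: $R_d$ is empty or of pure dimension $k-d$.

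Next I would identify $[R_d]$ with Fulton's $\mathbb{R}$. Following the strategy of Eklund--Jost--Peterson (cf. Remark \ref{res}), blow up $X$ along $Z$ to get $\pi : \widetilde{X} \to X$ with exceptional divisor $\widetilde{Z}$; then the strict transform $\widetilde{R}$ of $W = V(J_d)$ is exactly the residual scheme of $\widetilde{Z}$ in $\pi^{-1}(W)$ since $\widetilde{Z}$ is now Cartier, and $\pi(\widetilde{R}) = R_d$. The point is that, because $W \setminus Z$ is generically reduced of the expected codimension, $\widetilde{R} \to R_d$ is birational and $\widetilde{R}$ meets $\widetilde{Z}$ properly; therefore the cycle $\eta_\ast\{c(\widetilde{N} \otimes \mathscr{O}(-\widetilde{Z})) \cap s(\widetilde{R},\widetilde{X})\}_{k-d}$ reduces, in the top (i.e. $(k-d)$-dimensional) degree, to $\eta_\ast[\widetilde{R}] = [R_d]$: the contributions of $s(\widetilde{R},\widetilde{X})$ in dimension $> k-d$ vanish because $\widetilde{R}$ already has pure dimension $k-d$ (so $s(\widetilde{R},\widetilde{X}) = [\widetilde{R}] + (\text{lower})$), and the Chern-class factor acts as the identity on the top piece. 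Substituting $W = V(f_1) \cap \cdots \cap V(f_d)$, $Z$, $R_d$ into Proposition \ref{GeneralRIF} and noting $X_1 \cdot \ldots \cdot X_d = [V(f_1)] \cdot \ldots \cdot [V(f_d)]$ then gives the displayed formula, with $N(d) = \bigoplus_{i=1}^d N_{V(f_i)}X|_Z$ playing the role of $N$.

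I expect the main obstacle to be the Bertini/genericity step and its interaction with the saturation. Ordinary Bertini applies to base-point-free linear systems on smooth (or suitably nice) varieties, but here I am taking general members of a sub-linear-system $I(\alpha) \subset |\alpha|$ whose base locus is precisely $Z$, and I need the resulting intersection to behave correctly \emph{both} outside $Z$ (generic smoothness / expected dimension, to control $R_d$) \emph{and} along $Z$ (so that blowing up $X$ along $Z$ separates the $Z$-part from the $R_d$-part cleanly and the strict transform $\widetilde{R}$ is reduced and of expected dimension where it matters). Making "sufficiently generic outside $Z$" precise — presumably a statement that $f_1, \ldots, f_d$ can be chosen in a dense open subset of $I(\alpha)^d$ so that $R_d$ has the stated dimension and $\widetilde{R}$ is generically reduced — is the technical heart, and is where the nefness hypothesis (ensuring base-point-freeness of $\alpha - \deg g_i$, hence that $I(\alpha)$ is "large enough" to apply Bertini on $X \setminus Z$) is essential.
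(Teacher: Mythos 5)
Your overall architecture matches the paper's: reduce to generators of common degree $\alpha$ using base-point-freeness of the systems $|\alpha - \deg g_i|$, apply Proposition \ref{GeneralRIF} with $V = X$ and $X_i = V(f_i)$, and then argue that Fulton's residual class $\mathbb{R}$ collapses to $[R_d]$ because $\widetilde{R}_d$ is pure of dimension $k-d$ and pushes forward to $R_d$. However, there is a genuine gap exactly at the step you flag as the ``technical heart,'' and it is not merely a matter of writing out a routine Bertini argument. Your Bertini argument on $X \setminus Z$ only controls the intersection $V(f_1) \cap \ldots \cap V(f_d)$ \emph{away from} $Z$, hence only the scheme $R_d$ downstairs. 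But the class entering Proposition \ref{GeneralRIF} is built from $s(\widetilde{R}_d, \widetilde{X})$ on the blow-up, and what must be ruled out is that $\widetilde{R}_d$ acquires excess components \emph{contained in the exceptional divisor} $\widetilde{Z}$, i.e.\ lying over $Z$ itself. Such components would contribute to $\mathbb{R}$ but would be erased by the saturation $(J_d : I^{\infty})$ defining $R_d$, so the asserted equality $\mathbb{R}_d = [R_d]$ would fail. Your justification (``$\widetilde{R}$ meets $\widetilde{Z}$ properly because $W \setminus Z$ is generically reduced of expected codimension'') is circular on this point: the behavior of $W$ off $Z$ says nothing about how $\pi^{-1}(W)$ sits along $\widetilde{Z}$.

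The paper closes this gap with a concrete device your proposal lacks. Embedding $\widetilde{X}$ in $X \times \mathbb{P}^{t}$ via $\pi^{\ast}\mathscr{O}_{X}(\alpha) \otimes \mathscr{O}_{\widetilde{X}}(-\widetilde{Z})$, with $\phi : \widetilde{X} \to \mathbb{P}^{t}$ the induced morphism, one computes in local coordinates that $f_i = \bigl(\sum_{j}\lambda^{j}_{i}w_{j}\bigr)g_{\beta}$, where $g_{\beta}$ cuts out $\widetilde{Z}$ locally; hence $\widetilde{R}_d = \phi^{-1}(L)$ for a \emph{general} codimension-$d$ linear subspace $L \subset \mathbb{P}^{t}$. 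This single observation delivers everything at once: $\widetilde{R}_d$ is empty or pure of dimension $k-d$, the intersection $\widetilde{R}_d \cap \widetilde{Z}$ has dimension $k-d-1$ (so no component of $\widetilde{R}_d$ lies in $\widetilde{Z}$, which both kills the excess-component worry and justifies $\pi_{\ast}[\widetilde{R}_d] = [R_d]$ via the primary decomposition of $I_{W_d}$), and the embedding of $\widetilde{R}_d$ is regular, so that $s(\widetilde{R}_d,\widetilde{X}) = c(N_{\widetilde{R}_d}\widetilde{X})^{-1}\cap[\widetilde{R}_d]$ and the $(k-d)$-dimensional part of the residual class is exactly $[\widetilde{R}_d]$. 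To complete your proof you would need to supply this (or an equivalent) argument controlling the residual scheme \emph{over} $Z$, not just outside it.
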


\begin{proof}
The analogous theorem was proved for projective spaces by Eklund, Jost and Peterson \cite[Theorem 3.2]{EKLUND}. Only small, formal modifications are needed to generalize it for toric varieties. For sake of completeness, we include the modified proof here.

Since each $|\alpha - \textnormal{deg } g_{i}|$ is base-point free, the ideal $\langle I(\alpha) \rangle$ generated by $I(\alpha) = I \cap S(\alpha)$ defines the same subscheme as $I.$ Indeed, we get $(\langle I(\alpha) \rangle :B^{\infty}) = (I:B^{\infty});$ to see this, consider the following procedure in $t+1$ steps. First, replace the generator $g_{0}$ by all products of $g_{0}$ with monomials in $S$ of degree $\alpha - \textnormal{deg }g_{0}.$ Since the corresponding linear system is base-point free and we consider \textit{all} monomials, the saturation of the new ideal by $B$ is equal to $(I:B^{\infty}).$ Now, do the same with each $g_{i}, 1 \leq i \leq t.$ We may therefore assume that the generators $g_{i}$ of $I$ are homogeneous of same degree $\alpha.$

Consider the blow-up $\pi: \widetilde{X} \rightarrow X$ of $X$ along the closed subscheme $Z = V((g_{0},\ldots,g_{t})),$ and let $\widetilde{Z}$ be the exceptional divisor. Note that by \cite[Proposition II.7.16]{Hart:1977}, $\widetilde{X}$ is a variety. By \cite[Section 4.4]{FULTON}, the line bundle $\pi^{\ast}\mathscr{O}_{X}(\alpha) \otimes \mathscr{O}_{\widetilde{X}}(-\widetilde{Z})$ defines an embedding of $\widetilde{X}$ as a subvariety of the toric variety $X \times \mathbb{P}^{t},$ whose Cox ring is $S':=S[y_{0},\ldots,y_{t}].$ The ideal of $\widetilde{X}$ in $S'$ contains $(g_{i}y_{j}-g_{j}y_{i}, 0 \leq i < j \leq t) \subseteq S'.$ Denote by $\phi: \widetilde{X} \rightarrow \mathbb{P}^{t}$ the composition of the embedding $\widetilde{X} \hookrightarrow X \times \mathbb{P}^{t}$ and the projection map $X \times \mathbb{P}^{t} \rightarrow \mathbb{P}^{t}.$

Put $W_d:= V((f_{1}, \ldots, f_{d}):B^{\infty}),$ and let $\widetilde{W}_{d} := \pi^{-1}(W_{d}).$ Then $\widetilde{Z}$ is a divisor contained in $\widetilde{W}_{d},$ and its residual scheme $\widetilde{R}_{d}$ in $\widetilde{W}_{d}$ has ideal $(I_{\widetilde{W}_{d}}:I_{\widetilde{Z}}^{\infty}).$ Working locally on $\widetilde{X}$ we will show that $\widetilde{R}_{d}$ is either empty or of pure dimension $k-d.$ Consider the affine open set $U := U_{\rho\beta} \subset \widetilde{X}$ where $x_{\rho} \neq 0 \neq y_{\beta}.$ Letting $w_{i} := y_{i}/y_{\beta},$ we have coordinates $w_{0},\ldots, \widehat{w_{\beta}},\ldots, w_{t}$ on an affine $\mathbb{C}^{t} \subset \mathbb{P}^{t}.$ By definition, there exist general vectors $(\lambda^{0}_{i},\ldots, \lambda^{t}_{i}) \in \mathbb{C}^{t+1}$ such that for each $i,$
\begin{displaymath}
f_{i} = \sum_{j=0}^{t}\lambda^{j}_{i}g_{j}.
\end{displaymath}

Since $g_{i}y_{j} - g_{j}y_{i}= 0$ for all $i,j,$ we get $g_{j}y_{\beta} = g_{\beta}y_{j}.$ Dividing both sides by $y_{\beta}$ yields $g_{j} = g_{\beta}w_{j}.$ But $\widetilde{Z} = V((g_{0}, \ldots, g_{t})),$ so $\widetilde{Z} \cap U$ is defined by $g_{\beta}.$ Therefore, we also get
\begin{displaymath}
f_{i} = \left(\sum_{j=0}^{t}\lambda^{j}_{i}w_{j}\right)g_{\beta},
\end{displaymath}
and consequently the ideal of $\widetilde{R}_{d} \cap U$ is $\left(\sum_{j=0}^{t}\lambda^{j}_{1}w_{j},\ldots,\sum_{j=0}^{t}\lambda^{j}_{d}w_{j}\right).$ So $\widetilde{R}_{d} = \phi^{-1}(L)$ for a general linear subspace $L \subset \mathbb{P}^{t}$ of codimension $d,$ and is either empty or has pure dimension $k-d,$ and $\widetilde{R}_{d} \cap \widetilde{Z}$ has pure dimension $k-d-1$ unless it is empty. Thus, no irreducible component of $\widetilde{R}_{d}$ can be contained in $\widetilde{Z}.$ Now, since $\pi^{\ast}\mathscr{O}_{X}(\alpha) \otimes \mathscr{O}_{\widetilde{V}}(-\widetilde{Z})$ is base-point free on the variety $\widetilde{X}$ and $\widetilde{R}_d$ is cut out by generic sections, the embedding of $\widetilde{R}_d$ in $\widetilde{X}$ is also regular (cf. \cite[Lemma 2.1]{EKLUND}).

Using the residual intersection formula (Proposition \ref{GeneralRIF}), we see that
\begin{equation*}\label{eqn:essential_equality}
[V(f_{1})] \cdot \ldots \cdot [V(f_{d})] = \left\{c(N(d)) \cap s(Z,X)\right\}_{k-d} + \mathbb{R}_{d},
\end{equation*}
in $A_{\ast}(W_{d}).$ Here, $\mathbb{R}_{d} := \eta_{\ast}\left(\left\{c(\widetilde{N(d)} \otimes \mathscr{O}(-\widetilde{Z}) ) \cap s(\widetilde{R}_d,\widetilde{X})\right\}_{k-d}\right),$ letting $\widetilde{N(d)}$ denote the pullback to $\widetilde{Z}$ of $N(d).$ Now, because $\widetilde{R}_{d}$ has pure dimension $k-d,$ the class $\mathbb{R}_{d}$ is actually equal to $\eta_{\ast}[\widetilde{R}_{d}],$ with $[\widetilde{R}_{d}] \in A_{\ast}(\widetilde{R}_{d})$ the fundamental class of $\widetilde{R}_{d}.$ Indeed, the embedding of $\widetilde{R}_{d}$ in $\widetilde{X}$ being regular, we have $s(\widetilde{R}_{d},\widetilde{X}) = c(N_{\widetilde{R}_{d}}\widetilde{X})^{-1} \cap [\widetilde{R}_{d}].$ Consequently, $c(\widetilde{N(d)} \otimes \mathscr{O}(-\widetilde{Z}) ) \cap s(\widetilde{R}_d,\widetilde{X}) = [\widetilde{R}_{d}] + \ldots,$ where the remaining terms have strictly lower dimension and therefore vanish when taking the part of dimension $k-d.$

We now push the entire equality (\ref{eqn:essential_equality}) forward to $X.$ Thus, what remains to show is that $\pi_{\ast}[\widetilde{R}_{d}] = [R_{d}],$ where $[\widetilde{R}_{d}]$ is now the class of $\widetilde{R}_{d}$ in $A_{\ast}(\widetilde{X}).$ Now, by definition $\widetilde{R}_{d}$ is supported on $\overline{\widetilde{W}_{d} \setminus \widetilde{Z}}$ and $R_{d}$ on $\overline{W_{d} \setminus Z}.$ On the other hand, the ideal of $R_{d}$ is $(I_{W_{d}}:I_{Z}^{\infty}).$ If $I_{W_{d}}$ has primary decomposition $\bigcap_{i}Q_{i},$ then $I_{R_{d}}$ has primary decomposition
\begin{equation*}
\bigcap_{V(Q_{i}) \not\subseteq Z} Q_{i}.
\end{equation*}
Since $\pi$ induces an isomorphism $(\widetilde{W}_{d} \setminus \widetilde{Z}) \cong (W_{d} \setminus Z)$ and no irreducible component of $\widetilde{R}_{d}$ is contained in $\widetilde{Z},$ it follows that $\pi_{\ast}[\widetilde{R}_{d}] = [\pi(\widetilde{R}_{d})] = [R_{d}].$
\end{proof}

\begin{proposition}\label{prop:formula}
Let $Z \stackrel{j}{\hookrightarrow} X_{\Sigma}$ be the closed subscheme of a smooth $k$-dimensional toric variety $X_{\Sigma}$ associated to a fan $\Sigma$ in $\mathbb{R}^{k},$ determined by an ideal $I=(g_{0},\ldots,g_{t})$ in the homogeneous coordinate ring $S_{\Sigma}.$ Let $n := \textnormal{dim }Z.$ For generic polynomials $f_{1}, \ldots, f_{k}$ in $I(\alpha),$ $\alpha$ being an element of $N(g_{0},\ldots,g_{t}),$ and for all $d \in \{k-n, \ldots, k\},$ let $R_{d}$ be the scheme associated to the ideal
\begin{equation*}
(((f_{1}, \ldots, f_{d}):B^{\infty}):I^{\infty}). 
\end{equation*}
Letting $j_{\ast}s(Z,X) = s_{0} + \ldots + s_{n}$ with $s_{i} \in A^{k-n+i}(X),$ there is a recursive formula for the $s_{i}:$
\begin{eqnarray*}
s_{0} & = & \alpha^{k-n} - [R_{k-n}] \\
s_{i} & = & \alpha^{i+k-n} - [R_{i+k-n}] - \sum_{j=0}^{i-1} {i+k-n \choose i-j}\alpha^{i-j}s_{j}, \text{ for all } i \geq 1.
\end{eqnarray*}
\end{proposition}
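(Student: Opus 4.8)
\emph{Proof proposal.} The plan is to substitute the divisor classes and normal bundles occurring in Theorem \ref{thm:main} into its formula and then to solve the resulting triangular linear system for the components $s_{i}$. First I would note that, since each $f_{i}$ lies in the graded piece $S_{\alpha}$, the hypersurface $V(f_{i})$ is an effective divisor of class $\alpha \in A^{1}(X)$, so that $[V(f_{1})] \cdot \ldots \cdot [V(f_{d})] = \alpha^{d}$, while $[R_{d}] \in A_{k-d}(X) = A^{d}(X)$ (taken to be $0$ if $R_{d}$ is empty) by the pure-dimensionality part of the theorem. Next I would identify the total Chern class of $N(d)$: since $N_{V(f_{i})}X \cong \mathscr{O}_{X}(\alpha)|_{V(f_{i})}$ is a line bundle, the bundle $N(d) = \bigoplus_{i=1}^{d} N_{V(f_{i})}X|Z$ is a direct sum of $d$ copies of $j^{\ast}\mathscr{O}_{X}(\alpha)$, whence $c(N(d)) = (1 + j^{\ast}\alpha)^{d} = \sum_{l=0}^{d}\binom{d}{l}(j^{\ast}\alpha)^{l}$.

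I would then push the class $c(N(d)) \cap s(Z,X)$ forward to $X$ using the projection formula $j_{\ast}\bigl((j^{\ast}\alpha)^{l} \cap \xi\bigr) = \alpha^{l}\cdot j_{\ast}\xi$, which gives
\begin{equation*}
j_{\ast}\bigl(c(N(d)) \cap s(Z,X)\bigr) = \sum_{l=0}^{d}\binom{d}{l}\alpha^{l}\cdot j_{\ast}s(Z,X) = \sum_{l=0}^{d}\sum_{i=0}^{n}\binom{d}{l}\alpha^{l}\cdot s_{i}.
\end{equation*}
Extracting the component in $A_{k-d}(X) = A^{d}(X)$ amounts to keeping only the summands with $l + (k-n+i) = d$, i.e.\ $l = d - k + n - i$; writing $d = m + k - n$ with $0 \le m \le n$, Theorem \ref{thm:main} then becomes
\begin{equation*}
\alpha^{m+k-n} = \sum_{i=0}^{m}\binom{m+k-n}{m-i}\alpha^{m-i}s_{i} + [R_{m+k-n}]
\end{equation*}
in $A^{\ast}(X)$. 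Here I use the index convention of Section \ref{segrenotation}, so that $s_{i} \in A^{k-n+i}(X)$; the bound $i \le m$ comes from $l = m-i \ge 0$, and the higher-dimensional components of $s(Z,X)$ vanish automatically.

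Finally I would solve this system recursively in $m$. For $m = 0$ the equation reads $\alpha^{k-n} = s_{0} + [R_{k-n}]$, giving the first formula. For $m \ge 1$ the $i = m$ term on the right is exactly $s_{m}$, so isolating it yields
\begin{equation*}
s_{m} = \alpha^{m+k-n} - [R_{m+k-n}] - \sum_{i=0}^{m-1}\binom{m+k-n}{m-i}\alpha^{m-i}s_{i},
\end{equation*}
which is precisely the asserted recursion after renaming the outer index $m$ to $i$ and the inner index $i$ to $j$. I do not anticipate any genuine obstacle: all the geometric content lies in Theorem \ref{thm:main}, and what remains is the projection-formula computation of $c(N(d)) \cap s(Z,X)$ together with careful bookkeeping of the codimension grading; the only point calling for a moment's care is checking that the degree-matching $l = d-k+n-i$ reproduces exactly the binomial coefficients $\binom{i+k-n}{i-j}$ that appear in the statement.
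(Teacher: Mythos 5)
Your proposal is correct and follows essentially the same route as the paper's proof: identify the left-hand side of Theorem \ref{thm:main} with $\alpha^{d}$, compute $c(N(d)) = (1+\alpha)^{d}$ via $N(d) = j^{\ast}\bigl(\bigoplus_{i=1}^{d}\mathscr{O}_{X}(\alpha)\bigr)$, apply the projection formula, extract the codimension-$d$ part, and solve the resulting triangular system. Your version is if anything slightly more explicit than the paper's about the projection-formula step and the degree bookkeeping.
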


\begin{proof}
Note that the equality
\begin{equation*}
[V(f_{1})] \cdot \ldots \cdot [V(f_{d})] = \left\{j_{\ast} \left(c(N(d)) \cap s(Z,X)\right)\right\}_{k-d} + [R_{d}]
\end{equation*}
simplifies. Indeed, since the degree (in the Cox ring) of each $f_{i}$ is $\alpha,$ the left hand side is simply $\alpha^{d}.$ On the other hand, $N(d) = j^{\ast}E_{d},$ where $E_{d} := \bigoplus_{i=1}^{d} \mathscr{O}_{X}(\alpha),$ so the right hand side becomes
\begin{equation*}
\left\{c(E_{d}) \cap j_{\ast}s(Z,X)\right\}_{k-d} + [R_{d}] = \left\{(1+\alpha)^{d} \cap \sum_{i=0}^{n} s_{i} \right\}_{k-d} + [R_{d}]. 
\end{equation*}
Hence the relation becomes: 
\begin{eqnarray*}
\alpha^{d} & = & \left\{\sum_{j=0}^{d}{d \choose j}\alpha^{j}\left(s_{0} + s_{1} + \ldots + s_{n}\right)\right\}_{k-d} + [R_{d}] \\
\Longleftrightarrow \alpha^{d} & = & \sum_{i+j = d-(k-n)} {d \choose j}\alpha^{j}s_{i} + [R_{d}]
\end{eqnarray*}
Taking $d=k-n$ yields $s_{0} = \alpha^{k-n} - [R_{k-n}],$ and it is easily seen that the relation
\begin{equation*}
s_{i} = \alpha^{i+k-n} - [R_{i+k-n}] - \sum_{j=0}^{i-1} {i+k-n \choose i-j}\alpha^{i-j}s_{j}
\end{equation*}
holds for all $i \geq 1.$
\end{proof}

To explicitly find the $s_{i}$ from these equations, one needs a way to compute each $[R_{d}].$ Recall that $R_{d}$ is pure-dimensional and has codimension $d$ in $X.$ Hence its class can be put in the form
\begin{equation*}
[R_{d}] = \sum_{i=1}^{h_{k-d}} b^{(d)}_{i}\omega^{(d)}_{i}, b^{(d)}_{i} \in \mathbb{Z}.
\end{equation*}
Here, the $\omega^{(d)}_{i}$ are generators of $A^{d}(X)$ and $h_{i} = \textnormal{rk }A_{i}(X).$ For each set $\underline{p}$ of non-negative integers $p_{j}, 1 \leq j \leq r,$ such that $\sum p_{j} = k-d,$ consider the class
\begin{equation*}
[R_{d}] \cdot \prod_{j=1}^{r}D_{j}^{p_{j}} =: \gamma_{\underline{p}}\omega^{(k)} \in A^{k}(X),
\end{equation*}
with $\{\omega^{(k)}\}$ the basis for $A^{k}(X).$ On the other hand,
\begin{equation*}
[R_{d}] \cdot \prod_{j=1}^{r}D_{j}^{p_{j}} = \sum_{i=1}^{h_{k-d}}b^{(d)}_{i} \underbrace{\left(\omega^{(d)}_{i}\prod_{j=1}^{r}D_{j}^{p_{j}}\right)}_{\beta_{i,\underline{p}}\omega^{(k)}},
\end{equation*}
so we have a set of linear equations in the $b^{(d)}_{i},$ namely
\begin{equation*}
\left\{\sum_{i=1}^{h_{k-d}}b^{(d)}_{i}\beta_{i,\underline{p}} = \gamma_{\underline{p}}, \quad \underline{p} \in \Gamma_{k-d}\right\},
\end{equation*}
where $\Gamma_{k-d}$ is the set of all $r$-tuples of non-negative integers whose sum is $k-d.$ Note that many of these equations will simply state that $0=0,$ or be linear combinations of each other. In fact, it would have been sufficient to consider the intersection products of $[R_{d}]$ with the generators of $A^{k-d}(X),$ but this is not implemented in the ToricVariety module of \verb+Sage+ \cite{SAGE}.

Intersecting $[R_{d}]$ with $D_{1}$ a total of $p_{1}$ times, $D_{2}$ a total of $p_{2}$ times, etc., corresponds to adding $p_{1}$ generic polynomials of degree $[D_{1}],$ $p_{2}$ generic polynomials of degree $[D_{2}],$ etc., to the ideal $I_{R_{d}}.$ The resulting ideal gives a 0-dimensional subscheme of $X,$ whose length, $\gamma_{\underline{p}},$ is independent of choices made and computable in \verb+Macaulay2+ \cite{M2}.

\section{Examples}
In this section we will compute the Segre class of subschemes of smooth projective toric varieties for some examples. This will be done using the algorithm given in Section \ref{ALG}, and \verb+Macaulay2+ \cite{M2} and \verb+Sage+ \cite{SAGE}. 

\subsection{A first example: Hirzebruch surfaces}
We here show how smoothly the algorithm of Eklund--Jost--Peterson generalizes to slightly more complicated toric varieties, namely Hirzebruch surfaces. Of course, surfaces are easier to treat than general varieties, since their subschemes are either divisors, zero-dimensional or of mixed dimension, and only the last case poses problems.
\begin{defi}
For each integer $e \geq 0,$ the $e$th Hirzebruch surface is the ruled surface over $\mathbb{P}^{1}$ defined by
\begin{equation*}
\Fe:= \mathbb{P}(\mathscr{O}_{\mathbb{P}^{1}} \oplus \mathscr{O}_{\mathbb{P}^{1}}(-e)).
\end{equation*} 
\end{defi}

\begin{remark}
Note that $\mathbb{F}_{0}$ is simply $\mathbb{P}^{1} \times \mathbb{P}^{1},$ while $\mathbb{F}_{1}$ is isomorphic to $\mathbb{P}^{2}$ blown up in a point. For $e>0$ there is a special curve on $\Fe;$ the only rational, irreducible curve with negative self-intersection number, and this number is $-e.$ Considered as a bundle over $\mathbb{P}^{1},$ the fibers of $\Fe$ are irreducible curves with self-intersection numbers 0, all rationally equivalent to each other.
\end{remark}

Now, let $\Sigma_e$ be the fan in $\mathbb{Z}^2$ with ray generators
\begin{displaymath}
v_1=\begin{bmatrix}1\\0 \end{bmatrix}, \quad v_2=\begin{bmatrix}0\\1 \end{bmatrix}, \quad v_3=\begin{bmatrix}-1\\e \end{bmatrix}, \quad v_4=\begin{bmatrix}0 \\-1 \end{bmatrix}
\end{displaymath}
corresponding to rays $\rho_i, 1 \leq i \leq 4.$ For each $e \geq 0$, the toric variety associated to $\Sigma_e$ is precisely the Hirzebruch surface $\Fe$. The torus invariant divisor associated to the ray $\rho_{i}$ is denoted by $D_{i}.$ The Picard group of $\Fe$ is generated by two such divisors whose classes are linearly independent. Choosing $\{[D_{1}],[D_{2}]\}$ as basis for the Picard group (i.e., the class of a fiber and the class of the exceptional curve), the intersection matrix is $\begin{bmatrix}0 & 1 \\ 1 & -e\end{bmatrix}.$ The Cox ring of $\Fe$ can be given by the variables $x_{0} = x_{\rho_{1}},x_{1} = x_{\rho_{3}},y_{0} = x_{\rho_{4}},y_{1} = x_{\rho_{2}}.$ 
The choice of basis for the the Picard group leads to a grading of the variables in the Cox ring by $(1,0), (1,0), (e,1)$ and $(0,1).$ The Cox ring $S = \mathbb{C}[x_{0},x_{1},y_{0},y_{1}]$ can therefore be described as
\begin{equation*}
S(a,b) := \bigoplus_{\substack{\alpha_{0}+\alpha_{1}+e\beta_{0} = a \\ \beta_{0}+\beta_{1} = b}} \mathbb{C}x_{0}^{\alpha_{0}}x_{1}^{\alpha_{1}}y_{0}^{\beta_{0}}y_{1}^{\beta_{1}}.
\end{equation*}
Let $B$ denote the irrelevant ideal of $\Fe,$ i.e., $B := (x_{0}y_{0},x_{0}y_{1},x_{1}y_{0},x_{1}y_{1}).$ 

Denote by $I \subseteq S$ the $B$-saturated ideal of a subscheme $Z \stackrel{j}{\hookrightarrow} \mathbb{F}_e$. We will provide a closed expression for $j_{\ast}s(Z,\Fe) \in A^{\ast}(\Fe)$ in terms of quantities which are either known or ``easily computable.'' Let $E:=[D_2]$ denote the class of the exceptional $(-e)$-curve on $\Fe,$ and let $F:=[D_1]$ denote the class of a fiber. By Theorem \ref{thm:Danilov} we have $A^{\ast}(\Fe) = \mathbb{Z}[E,F]/(E^2+eEF, F^2),$ and $j_{\ast}s(Z,\Fe)$ is a polynomial in $E$ and $F$; more precisely, we want to find the integers $\alpha, \beta, \gamma$ in the expression
\begin{equation*}
j_{\ast}s(Z,\Fe) = \alpha F + \beta E + \gamma EF, \quad (\alpha, \beta, \gamma) \in \mathbb{Z}^3.
\end{equation*}
The passage from the ideal $I$ to the identification of these three integers is not immediate, unless $Z$ is an effective divisor or a zero-dimensional scheme. In the first case, $Z$ is the zero scheme of a single polynomial $f \in S,$ of bidegree, say, $(a,b),$ and $$j_{\ast}s(Z,\Fe) = aF+bE+(b^2e-2ab)EF.$$ The bidegree $(a,b)$ can be computed using \verb+Macaulay2+ (input \verb+>degrees I+). In the second case, $j_{\ast}s(Z,\Fe) = d EF.$ Here, the integer $d$ can be obtained by computing the degree of the ideal $I$ in \verb+Macaulay2+ \cite{M2} (input \verb+>degree I+).

The remaining case is the one where $Z$ is assumed to be of mixed dimension. Then we have to use Theorem \ref{thm:main} and Proposition \ref{prop:formula} to compute the Segre class, and moving to this situation requires several manipulations. Suppose $I$ is generated by bihomogeneous polynomials $(g_0,\ldots, g_t)$, where $g_i$ has bidegree $(a_i, b_i)$. We start by recalling when a linear system on Hirzebruch surfaces is base-point free:

\begin{theorem}
If $a,b \geq 0,$ the linear system $|aD_{1} + bD_{2}|$ on $\mathbb{F}_{e}$ is base-point free if and only if $eb \leq a.$
\end{theorem}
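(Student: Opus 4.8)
The plan is to compute explicitly the base locus of the linear system $|aD_1 + bD_2|$ on $\mathbb{F}_e$ using the Cox ring description, since base-point freeness is equivalent to nefness for toric varieties (as already noted in the excerpt). Recall that a monomial basis for $S(a,b)$ consists of the $x_0^{\alpha_0}x_1^{\alpha_1}y_0^{\beta_0}y_1^{\beta_1}$ with $\alpha_0 + \alpha_1 + e\beta_0 = a$ and $\beta_0 + \beta_1 = b$. The linear system $|aD_1 + bD_2|$ is base-point free precisely when these monomials have no common zero on $\mathbb{F}_e$, i.e., when for every point of $\mathbb{F}_e$ (equivalently, every point of $\mathbb{C}^4 \setminus V(B)$ modulo the torus action) at least one such monomial is nonzero.

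First I would dispose of the trivial directions. If $a < 0$ or $b < 0$, the graded piece $S(a,b)$ is either empty or the system is clearly not base-point free; in fact the hypothesis $a, b \geq 0$ is already assumed. For the forward implication, suppose $|aD_1 + bD_2|$ is base-point free. Consider the torus-invariant point where $x_0 = y_1 = 0$ (the intersection of the divisors $D_1$ and $D_2$, which is a point of $\mathbb{F}_e$ since $\rho_1, \rho_2$ span a cone of $\Sigma_e$). A monomial $x_0^{\alpha_0}x_1^{\alpha_1}y_0^{\beta_0}y_1^{\beta_1}$ is nonvanishing there iff $\alpha_0 = \beta_1 = 0$, i.e., iff $\alpha_1 = a - e\beta_0 \geq 0$ and $\beta_0 = b$. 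Such a monomial exists iff $a - eb \geq 0$, i.e., $eb \leq a$. So base-point freeness forces $eb \leq a$.

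For the converse, assume $eb \leq a$; I would exhibit, for each of the four torus-invariant points of $\mathbb{F}_e$ (and more generally each point of the open cover by the $U_\sigma$), a monomial of bidegree $(a,b)$ not vanishing there. Concretely: at the point $x_0 = y_1 = 0$ use $x_1^{a - eb}y_0^{b}$ (valid since $a - eb \geq 0$); at $x_0 = y_0 = 0$ use $x_1^{a}y_1^{b}$ (note $\alpha_1 = a$, $\beta_0 = 0$ works, requiring only $a \geq 0$); at $x_1 = y_1 = 0$ use $x_0^{a - eb}y_0^{b}$; at $x_1 = y_0 = 0$ use $x_0^{a}y_1^{b}$. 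Since base-point freeness is equivalent to nefness on a toric variety and can be checked on the maximal cones / torus-fixed points, these four non-vanishing monomials suffice to conclude the system is base-point free. Alternatively — and perhaps cleaner for the write-up — one invokes the known description of the nef cone of $\mathbb{F}_e$ in the basis $\{F, E\} = \{[D_1], [D_2]\}$: the nef cone is cut out by $F$ nef and $E + eF$ (equivalently $eF \le a$ after translating to coordinates) nef, which is exactly the inequality $eb \le a$ together with $b \ge 0$; then base-point freeness follows from \cite[Theorem 6.3.22]{COX2} or the toric Nakai criterion.

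The main obstacle is mostly bookkeeping: one must be careful that the ``points'' being tested genuinely lie on $\mathbb{F}_e$ — i.e., that the vanishing loci chosen do not lie in $V(B)$ — and that the monomial degree constraints $\alpha_0 + \alpha_1 + e\beta_0 = a$, $\beta_0 + \beta_1 = b$ with all exponents $\geq 0$ are satisfied by the exhibited monomials. Since checking base-point freeness reduces to the finitely many torus-fixed points when the system is globally generated on each affine chart, no genuinely hard step arises; the subtlety is purely in translating between the Cox-ring combinatorics and the geometric statement. I would therefore present the proof as a short verification via the nef cone of $\mathbb{F}_e$, citing \cite{COX2} for the equivalence of nefness and base-point freeness.
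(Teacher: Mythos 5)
Your proof is correct, but it takes a genuinely different route from the paper: the paper disposes of this statement with a one-line citation to Hartshorne's classification of base-point free linear systems on ruled surfaces (Theorem V.2.17 of \cite{Hart:1977}), translating his $f, C_{0}$ into $[D_{1}], [D_{2}]$. You instead give a self-contained toric argument, reading off the monomial basis of $S(a,b)$ from the grading $\alpha_{0}+\alpha_{1}+e\beta_{0}=a$, $\beta_{0}+\beta_{1}=b$ and testing for common zeros at the four torus-fixed points; the forward direction correctly extracts the obstruction $a-eb\geq 0$ at the point $x_{0}=y_{1}=0$, and your four exhibited monomials for the converse all have the right multidegree and nonnegative exponents. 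This buys independence from the theory of ruled surfaces and stays entirely inside the Cox-ring formalism the paper has already set up (it is really the standard computation of $\textnormal{Nef}(\mathbb{F}_{e})$ as the cone spanned by $F$ and $E+eF$, since $aF+bE=(a-eb)F+b(E+eF)$), at the cost of one step you should make explicit: why checking the four fixed points suffices. The clean justification is that the base locus of a complete linear system of a torus-invariant divisor class is a closed torus-invariant subset of the complete variety $\mathbb{F}_{e}$, hence if nonempty it contains a torus-fixed point; with that sentence added, your argument is complete and arguably better suited to the paper than the external citation.
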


\begin{proof}
This is \cite[Theorem V.2.17]{Hart:1977}, where his $f$ is our $[D_{1}]$ and his $C_{0}$ is our $[D_{2}].$
\end{proof}

Let $(a,b)$ be the apex of the cone $N(g_{0},\ldots,g_{t})$. In this case, a quick computation gives $a=\max_i\{e b_i\}-\min_i \{eb_i-a_i\}$ and $b=\max_i \{b_i\}$. Pick two general elements $f_1$ and $f_2$ in $I(a,b)$, where $I(a,b)$ denotes the bihomogeneous part of $I$ of bidegree $(a,b)$. We proceed in two steps. First, define the ideal $J_1:=((f_1):B^{\infty}),$ so that $V(J_{1})$ is a (possibly reducible) curve on $\Fe$. Let $I_{R_{1}}$ be the ideal $(J_{1}:I^{\infty})$. Then the subscheme $R_{1}:=V(I_{R_{1}})$ corresponds to $V(J_{1})$ with components from $Z$ removed, and it is either a divisor or empty. If $R_1$ is a divisor, then $I_{R_{1}}$ must be equal to $(f)$ for some bihomogeneous polynomial of bidegree, say, $(b^{(1)}_1,b^{(1)}_2)$, and we can, as previously, find the Segre class of $R_1$,
\begin{equation*}
j_{\ast}s(R_{1},\Fe) = b^{(1)}_1F+b^{(1)}_2E + ((b^{(1)}_2)^2e-2b^{(1)}_1b^{(1)}_2)EF.
\end{equation*}
If $R_{1}$ is empty, then the original scheme $Z$ was already a divisor, and the same formula holds with $b^{(1)}_1=b^{(1)}_2=0$. 
Hence, with $R_{1}$ the residual scheme to $Z$ in $V({J_{1}}),$ Proposition \ref{prop:formula} yields
\begin{equation*}
s_0=(a-b^{(1)}_1)F+(b-b^{(1)}_2)E.
\end{equation*}

Secondly, define $J_{2}:=((f_1,f_2):B^{\infty})$. With $I_{R_{2}} :=(J_{2}:I^{\infty})$, the subscheme $R_{2}:=V(I_{R_{2}})$ is either 0-dimensional or empty. Assume $R_{2}$ is 0-dimensional, then $j_{\ast}s(R_{2},\Fe)=b^{(2)}EF$. The integer $b^{(2)}$ can be obtained by computing the degree of $I_{R_{2}}$ in \verb+Macaulay2+ \cite{M2}. If $R_{2}$ is empty, the same holds with $b^{(2)}=0$. With $R_{2}$ the residual scheme to $Z$ in $V(J_{2})$, Proposition \ref{prop:formula} yields
\begin{eqnarray*}
s_1 & = & (aF+bE)^2-b^{(2)}EF-2(aF+bE)s_0.
\end{eqnarray*}

Combining the two expressions above, we get 
\begin{eqnarray*}
s_0 & = & (a-b^{(1)}_1)F+(b-b^{(1)}_2)E; \\
s_1 & = & (2ab^{(1)}_2+2bb^{(1)}_1-2ab+b^2e-2bb^{(1)}_2e-b^{(2)})EF.
\end{eqnarray*}
\noindent Thus, the Segre class of a subscheme of $\Fe$ is directly obtained from its ideal in the homogeneous coordinate ring.

\begin{example} \label{ex:hirzebruch}
Let $Z$ be the subscheme of $\mathbb{F}_1$ given by the $B$-saturated ideal $I=(x_1^2y_0^2+x_0^3x_1y_1^2,x_1y_0^2y_1^2+x_0^3y_1^4)$. The generators have bidegrees $(4,2)$ and $(3,4),$ thus the apex of the cone $N(g_{0},g_{1})$ is $(6,4).$ The choice of $\alpha=(6,4)$ gives us sufficient freedom to compute the necessary residual schemes and their classes, see Figure \ref{fig:cone}.

\begin{figure}
\begin{center}
\includegraphics[trim = 50mm 140mm 30mm 20mm, clip, width=8cm]{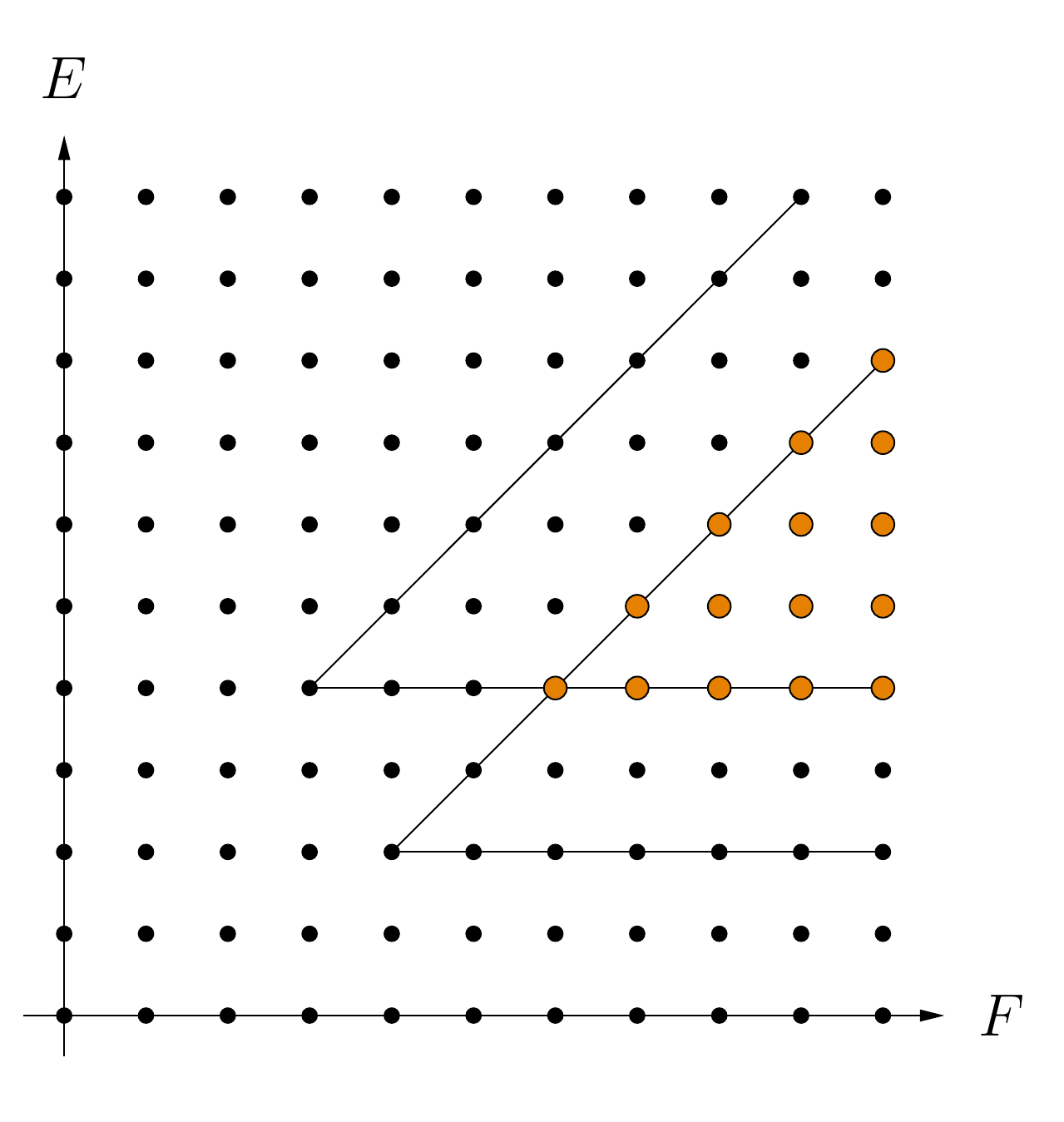}
\end{center}
\caption{The cone $N(g_0,g_1)$ in Example \ref{ex:hirzebruch}.}
\label{fig:cone}
\end{figure}

The computations can be performed in \verb+Macaulay2+ \cite{M2} with the following code:
\footnotesize
\begin{verbatimtab}
	S = QQ[x0,x1,y0,y1,Degrees=>{{1,0},{1,0},{1,1},{0,1}},Heft=>{1,1}]
	B = ideal(x0*y0,x1*y1,x0*y1,x1*y0)
	IS = ideal(x1^2*y0^2+x0^3*x1*y1^2,x1*y0^2*y1^2+x0^3*y1^4) 
	gensIS = flatten sort entries gens IS
	J = for i from 1 to 2 list sum(gensIS, g -> g*random({6,4}-degree(g),S))
	J1 = ideal(J_0)
	J2 = ideal(J_0,J_1)
	IR1 = saturate(saturate(J1,B),IS)
	IR2 = saturate(saturate(J2,B),IS)
	IR = degrees IR1
	IRL = IR_0
	b11 = IRL_0
	b12 = IRL_1
	b2 = degree IR2 
\end{verbatimtab}
\normalsize

The output of these computations is
\begin{eqnarray*}
(b_1^{(1)},b_2^{(1)}) &= & (3,2),\\
b_2&=&6.
\end{eqnarray*}

Using the acquired formulas above, the Segre classes of the subscheme $Z$ are
\begin{eqnarray*}
s_0&=&3F+2E,\\
s_1&=&-6EF.
\end{eqnarray*}

\end{example}

\subsection{Subschemes of $\mathbb{P}^1 \times \mathbb{P}^1 \times \mathbb{P}^1$}
In this example we will compute the Segre class of a subscheme of the toric variety $X:=\mathbb{P}^1 \times \mathbb{P}^1 \times \mathbb{P}^1$. The situation here is slightly more difficult than in the case of Hirzebruch surfaces because there is a greater variation of subschemes. On the other hand, the nef cone coincides with the effective cone, thus simplifying the choice of $\alpha.$ We include the general computation of the Segre class of a subscheme of $X$ to further illustrate our method. Last we show by an example how to find the Segre class of a particular subscheme. 

Note that $X$ is the toric variety associated to the fan $\Sigma$ given by the ray generators
\begin{displaymath}
v_1=\begin{bmatrix}1\\0\\0 \end{bmatrix}, \quad v_2=\begin{bmatrix}0\\1\\0 \end{bmatrix}, \quad v_3=\begin{bmatrix}0\\0\\1 \end{bmatrix}, \quad v_4=\begin{bmatrix}-1\\0 \\0 \end{bmatrix}, \quad v_5=\begin{bmatrix}0\\-1\\0 \end{bmatrix}, \quad v_6=\begin{bmatrix}0\\0\\-1\end{bmatrix},
\end{displaymath}
corresponding to rays $\rho_i, 1 \leq i \leq 6,$ and maximal cones
\begin{align*}
&\sigma_{1}=(\rho_1,\rho_2,\rho_3), \; \sigma_2=(\rho_2,\rho_3,\rho_4), \; \sigma_3=(\rho_3,\rho_4,\rho_5), \; \sigma_4=(\rho_1,\rho_3,\rho_5),\\
&\sigma_{5}=(\rho_1,\rho_2,\rho_6),\;\sigma_6=(\rho_2,\rho_4,\rho_6), \; \sigma_7=(\rho_4,\rho_5,\rho_6), \; \sigma_8=(\rho_1,\rho_5,\rho_6).
\end{align*}

The Cox ring of this variety is $$S(X) = \mathbb{C}[x_{0},x_{1},y_{0},y_{1},z_{0},z_1],$$ where the variables are can be trigraded in the following way,
\begin{eqnarray*}
\deg x_0 & =  (1,0,0)  = & \deg x_1,\\
\deg y_0 & =  (0,1,0)  = & \deg y_1,\\
\deg z_0 & =  (0,0,1)  = & \deg z_1.\\
\end{eqnarray*}
Hence, the trihomogeneous parts of $S$ can be described as, 
\begin{equation*}
S(a,b,c) := \bigoplus_{\substack{\alpha_0+\alpha_1=a \\ \beta_0+\beta_1 = b \\ \gamma_0+\gamma_1=c}} \mathbb{C}x_{0}^{\alpha_{0}}x_{1}^{\alpha_{1}}y_{0}^{\beta_{0}}y_{1}^{\beta_{1}}z_0^{\gamma_0}z_1^{\gamma_1}.
\end{equation*}
Observe that we have the irrelevant ideal $$B=(x_0y_0z_0,x_0y_0z_1,x_0y_1z_0,x_0y_1z_1,x_1y_0z_0,x_1y_0z_1,x_1y_1z_1,x_1y_1z_1).$$

The Chow ring of $X$ can be found by Theorem \ref{thm:Danilov},
\begin{equation*}
A^{\ast}(X) = \mathbb{Z}[D_1,D_2,D_3,D_4,D_5,D_6]/J,
\end{equation*}
where $J=(D_1D_4,D_2D_5,D_3D_6,D_1-D_4,D_2-D_5,D_3-D_6)$.
More precisely, for $0 \leq i \leq 3,$ $A^{i}(X)$ corresponds to polynomials of degree $i$ in $D_1,D_2,D_3,D_4,D_5,D_6.$ We trivially have $A^{0}(X) = \mathbb{Z}.$ By the relations given by $J$, $A^{1}(X)$ has three independent generators, for example $D_1$, $D_2$ and $D_3.$ Moreover, $A^{2}(X)$ can be generated by $D_1D_2$, $D_1D_3$ and $D_2D_3$, and everything else of degree 2 vanishes. In the same way, $A^{3}(X)$ is generated by $D_1D_2D_3$, and everything else of degree 3 vanishes.
Summing up, we have
\begin{eqnarray*}
A^{0}(X) & \cong & \mathbb{Z},\\
A^1(X) & \cong & D_1\mathbb{Z} \oplus D_2 \mathbb{Z} \oplus D_3 \mathbb{Z}, \\
A^2(X) & \cong & D_2D_3\mathbb{Z} \oplus D_1D_3 \mathbb{Z} \oplus D_1D_2 \mathbb{Z},\\
 A^{3}(X) & \cong & D_1D_2D_3 \mathbb{Z}.
\end{eqnarray*}

Let $Z$ be a subscheme of $X$ given by the $B$-saturated ideal $I\subset S$. Our aim is, as always, to obtain an expression for $j_{\ast}s(Z,X) \in A^{\ast}(X)$, with $j: Z \hookrightarrow X$ the inclusion. For clarity, we will consider all possible dimensions of $Z$ separately. The results are summarized in the following propositions.

\begin{proposition}
Let $Z$ be a $0$-dimensional subscheme of $\mathbb{P}^{1} \times \mathbb{P}^{1} \times \mathbb{P}^{1}$ determined by a homogeneous ideal $I$ in $S$, then its Segre class is $s_0=b_0 D_1D_2D_3$, where $b_0 \in \mathbb{Z}$ is the degree of $I$.
\end{proposition}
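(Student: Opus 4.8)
The plan is to reduce the statement to two elementary facts: that the Segre class of a zero-dimensional subscheme is simply its fundamental cycle, and that $D_1D_2D_3$ is the class of a reduced point in $A_0(X)$.

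First I would observe that, since $\dim Z=0$, the class $s(Z,X)\in A_*(Z)$ has a component only in dimension $0$, so that $j_*s(Z,X)=s_0\in A_0(X)$. It is a standard fact (see \cite{FULTON}) that the top-dimensional component of a Segre class is the fundamental cycle of the subscheme; as this is here the only component, $s(Z,X)=[Z]=\sum_{P\in Z}\bigl(\dim_{\mathbb C}\mathcal O_{Z,P}\bigr)[P]$, and hence $s_0=j_*[Z]$ is the zero-cycle $\sum_P(\dim_{\mathbb C}\mathcal O_{Z,P})[P]$ pushed forward to $X$. If one prefers a self-contained argument, this can be read off directly from the definition of $s(Z,X)$ in terms of the blow-up $\pi\colon\widetilde X\to X$ of $X$ along $Z$: since $\widetilde Z$ is a divisor on the $3$-fold $\widetilde X$ and $\eta\colon\widetilde Z\to Z$ has $0$-dimensional target, the only surviving summand of $\sum_{i\ge 1}(-1)^{i-1}\eta_*(\widetilde Z^{\,i})$ is $\eta_*(\widetilde Z^{\,3})$, which one computes to equal $[Z]$ (for a reduced point this is the self-intersection computation on $\widetilde Z=\mathbb P^{2}$ with normal bundle $\mathcal O(-1)$, and the general case follows by additivity over the points of $Z$ with multiplicities).

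Next I would identify the generator of $A_0(X)$. As $X=\mathbb P^1\times\mathbb P^1\times\mathbb P^1$ is complete, $A_0(X)\cong\mathbb Z$ and all closed points have the same class $[\mathrm{pt}]$; in particular every $[P]$ above equals $[\mathrm{pt}]$. By Theorem \ref{thm:Danilov}, $A^{3}(X)=A_0(X)$ is generated by $D_1D_2D_3$, and because the rays $\rho_1,\rho_2,\rho_3$ span the maximal cone $\sigma_1$, which is smooth (the vectors $v_1,v_2,v_3$ form a $\mathbb Z$-basis of the lattice), the scheme-theoretic intersection $D_1\cap D_2\cap D_3$ is the single reduced torus-fixed point corresponding to $\sigma_1$; hence $D_1D_2D_3=[\mathrm{pt}]$ in $A_0(X)$. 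Combining, $s_0=\bigl(\sum_P\dim_{\mathbb C}\mathcal O_{Z,P}\bigr)[\mathrm{pt}]=b_0D_1D_2D_3$ with $b_0=\sum_P\dim_{\mathbb C}\mathcal O_{Z,P}=\deg Z$, which is exactly the degree of $I$ (computed in \verb+Macaulay2+ by \verb+degree I+). I expect no real obstacle here: the only point that deserves a word of justification is that the whole Segre class collapses to the fundamental cycle even when $Z$ is non-reduced, and this is immediate once one notes there are no lower-dimensional components to correct for.
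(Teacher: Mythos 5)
The paper states this proposition with no proof at all (it is treated as immediate, just as the analogous zero-dimensional case on Hirzebruch surfaces is), so there is no argument of the authors' to compare yours against; what you have written is a correct justification of the omitted step. Both halves of your reduction are sound: for zero-dimensional $Z$ the class $s(Z,X)$ lives entirely in dimension $0$ and equals the fundamental cycle $[Z]=\sum_P \dim_{\mathbb C}\mathcal O_{Z,P}\,[P]$ (the standard identification of the top-dimensional part of a Segre class with that of $[Z]$, or your direct blow-up computation, where indeed only $\eta_*(\widetilde Z^{\,3})$ survives the push-forward to a zero-dimensional target); and $D_1D_2D_3=[\mathrm{pt}]$ because $\rho_1,\rho_2,\rho_3$ span the smooth maximal cone $\sigma_1$, so the three divisors meet transversally in the torus-fixed point of that cone. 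The one inaccuracy is your justification that $A_0(X)\cong\mathbb Z$ with all points rationally equivalent: completeness alone does not give this (consider a surface of general type). For this $X$ you should instead invoke Theorem \ref{thm:Danilov}, which the paper already uses to compute $A^3(X)=D_1D_2D_3\,\mathbb Z$, or the rationality of toric varieties. With that small repair the argument is complete and matches what the authors evidently had in mind; it is also consistent with specializing Proposition \ref{prop:formula} to $n=0$.
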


\begin{proposition}
Let $Z$ be a $1$-dimensional subscheme of $\mathbb{P}^{1} \times \mathbb{P}^{1} \times \mathbb{P}^{1}$ determined by a homogeneous ideal $I=(g_0,\ldots, g_t)$, where $g_i \in S(a_i,b_i,c_i)$. Set $$a:=\max \{a_i\}, b:=\max \{b_i\}, c:=\max \{c_i\}.$$ Then the Segre class of $Z$ is given by
\begin{eqnarray*}
s_0 & = & (2bc-b^{(2)}_1)D_2D_3+(2ac-b^{(2)}_2)D_1D_3+(2ab-b^{(2)}_3)D_1D_2,\\
s_1 & = & \bigl(3ab^{(2)}_1+3bb^{(2)}_2+3cb^{(2)}_3-b^{(3)}-12abc \bigr)D_1D_2D_3, 
\end{eqnarray*}
where the $b^{(d)}_i$'s are the coefficients of the classes of the residual schemes $R_d$.
\end{proposition}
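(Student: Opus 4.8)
The plan is to read this off Theorem \ref{thm:main} and the recursion of Proposition \ref{prop:formula} in the special case $X = \mathbb{P}^1\times\mathbb{P}^1\times\mathbb{P}^1$, where $k=3$ and $n=\dim Z = 1$, so that $j_\ast s(Z,X) = s_0+s_1$ with $s_0\in A^2(X)$ and $s_1\in A^3(X)$ (note $Z$ cannot be a component of the irreducible $3$-fold $X$, so the hypotheses of Theorem \ref{thm:main} are met). The first step is to pin down the cone $N(g_0,\dots,g_t)$ and a usable representative $\alpha$ inside it. Since $\mathrm{Pic}(X)\cong\mathbb{Z}^3$ with basis $[D_1],[D_2],[D_3]$, and a class $aD_1+bD_2+cD_3$ is base-point free (equivalently nef) exactly when $a,b,c\geq 0$, one has $\mathrm{Nef}(X)_{\mathbb{Z}} = \mathbb{Z}_{\geq 0}^3$. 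Hence $N(g_i) = (a_i,b_i,c_i)+\mathbb{Z}_{\geq 0}^3$, and $N(g_0,\dots,g_t)$ is the translated octant with apex $(a,b,c)=(\max a_i,\max b_i,\max c_i)$. I would therefore take $\alpha := aD_1+bD_2+cD_3$, choose three generic elements $f_1,f_2,f_3\in I(\alpha)$, and set $R_2 := V(((f_1,f_2):B^\infty):I^\infty)$ and $R_3 := V(((f_1,f_2,f_3):B^\infty):I^\infty)$, which by Theorem \ref{thm:main} are pure of codimension $2$ and $3$ respectively (or empty).

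The second step is the elementary bookkeeping in $A^\ast(X)$. By Theorem \ref{thm:Danilov}, after eliminating $D_4=D_1$, $D_5=D_2$, $D_6=D_3$, we get $A^\ast(X) = \mathbb{Z}[D_1,D_2,D_3]/(D_1^2,D_2^2,D_3^2)$. Since every $D_i^2$ (hence every $D_i^2D_j$) vanishes, a direct expansion gives
\[
\alpha^2 = 2bc\,D_2D_3 + 2ac\,D_1D_3 + 2ab\,D_1D_2, \qquad \alpha^3 = 6abc\,D_1D_2D_3.
\]
Writing the residual classes in the chosen bases as $[R_2] = b^{(2)}_1 D_2D_3 + b^{(2)}_2 D_1D_3 + b^{(2)}_3 D_1D_2$ and $[R_3] = b^{(3)} D_1D_2D_3$, Proposition \ref{prop:formula} gives $s_0 = \alpha^2 - [R_2]$, which is exactly the first displayed formula. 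For $s_1$ the recursion reads $s_1 = \alpha^3 - [R_3] - \binom{3}{1}\alpha s_0 = \alpha^3 - [R_3] - 3\alpha s_0$; expanding $\alpha s_0 = (6abc - ab^{(2)}_1 - bb^{(2)}_2 - cb^{(2)}_3)D_1D_2D_3$ and substituting $\alpha^3 = 6abc\,D_1D_2D_3$ yields $s_1 = (3ab^{(2)}_1 + 3bb^{(2)}_2 + 3cb^{(2)}_3 - b^{(3)} - 12abc)D_1D_2D_3$, as claimed.

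There is no genuinely hard step here: the statement is a direct corollary of Theorem \ref{thm:main} and Proposition \ref{prop:formula} combined with the explicit structure of $A^\ast(\mathbb{P}^1\times\mathbb{P}^1\times\mathbb{P}^1)$. The only point that deserves a line of justification is the identification of the apex of $N(g_0,\dots,g_t)$, i.e.\ that $\mathscr{O}_X(a,b,c)$ is globally generated precisely when $a,b,c\geq 0$; this is classical for a product of projective lines, and in general follows because the nef cone of a smooth complete toric variety is generated by the base-point-free classes. Everything else is the routine symbolic expansion recorded above — precisely the computation the algorithm of this paper performs.
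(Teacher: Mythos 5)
Your proof is correct and follows essentially the same route as the paper: a direct application of Proposition \ref{prop:formula} with $k=3$, $n=1$, $\alpha=aD_1+bD_2+cD_3$, using $A^{\ast}(X)=\mathbb{Z}[D_1,D_2,D_3]/(D_1^2,D_2^2,D_3^2)$ to expand $\alpha^2$, $\alpha^3$ and $\alpha\cdot s_0$. The only addition is your explicit justification that the apex of $N(g_0,\dots,g_t)$ is $(\max a_i,\max b_i,\max c_i)$, which the paper records separately in a remark following these propositions.
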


\begin{proof}
 Let $f_1,f_2,f_3$ be general elements of $I(a,b,c)$. We now find expressions for $[R_d]$ for $d$ from $2$ to $3$.

$\underline{d=2}$. Let $J_2=((f_1,f_2):B^{\infty})$ and $R_2=V(J_2:I^{\infty})$. Then $R_2$ is purely $1$-dimensional (or empty) and $$[R_2]=b^{(2)}_1 D_2D_3+b^{(2)}_2 D_1D_3+ b^{(2)}_3 D_1D_2 \in A^2(X).$$ By Proposition \ref{prop:formula} we have, 
\begin{eqnarray*}
s_0 & = & (aD_1+bD_2+cD_3)^2 -(b^{(2)}_1 D_2D_3+b^{(2)}_2 D_1D_3+ b^{(2)}_3 D_1D_2)\\
	    & = & (2bc-b^{(2)}_1)D_2D_3+(2ac-b^{(2)}_2)D_1D_3+(2ab-b^{(2)}_3)D_1D_2. 
\end{eqnarray*}

$\underline{d=3}$. Let $J_3=((f_1,f_2,f_3):B^{\infty})$ and $R_3=V(J_3:I^{\infty})$. Then $R_3$ is $0$-dimensional (or empty) and $$[R_3]=b^{(3)} D_1D_2D_3 \in A^3(X).$$ By Proposition \ref{prop:formula} we have,

\begin{eqnarray*}
s_1 & = & (aD_1+bD_2+cD_3)^3 -3(aD_1+bD_2+cD_3)s_0-b^{(3)} D_1D_2D_3\\
	    & = & \bigl(3ab^{(2)}_1+3bb^{(2)}_2+3cb^{(2)}_3-b^{(3)}-12abc \bigr)D_1D_2D_3. 
\end{eqnarray*}
\end{proof}

\begin{proposition}\label{p1p1p1n2}
Let $Z$ be a $2$-dimensional subscheme of $\mathbb{P}^{1} \times \mathbb{P}^{1} \times \mathbb{P}^{1}$ determined by a homogeneous ideal $I=(g_0,\ldots, g_t)$, where $g_i \in S(a_i,b_i,c_i)$. Set $$a:=\max \{a_i\}, b:=\max \{b_i\}, c:=\max \{c_i\}.$$ Then the Segre class of $Z$ is given by

\begin{eqnarray*}
s_0 & = & (a-b^{(1)}_1)D_1+(b-b^{(1)}_2)D_2+(c-b^{(1)}_3)D_3,\\
s_1 & = & \bigl(2bb^{(1)}_3+2cb^{(1)}_2-2bc-b^{(2)}_1\bigr)D_2D_3 \\
            &   & + \bigl(2ab^{(1)}_3+2cb^{(1)}_1-2ac-b^{(2)}_2\bigr)D_1D_3 \\
	    &   & +\bigl(2ab^{(1)}_2+2bb^{(1)}_1-2ab-b^{(2)}_3\bigr)D_1D_2,\\
s_2 & = & 6abc -6bcb^{(1)}_1-6acb^{(1)}_2-6abb^{(1)}_3+3ab^{(2)}_1+3bb^{(2)}_2+3cb^{(2)}_3-b^{(3)},
\end{eqnarray*}
where the $b^{(d)}_i$'s are the coefficients of the classes of the residual schemes $R_d$.
\end{proposition}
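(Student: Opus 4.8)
The plan is to specialize Theorem~\ref{thm:main} and Proposition~\ref{prop:formula} to $X=\mathbb{P}^{1}\times\mathbb{P}^{1}\times\mathbb{P}^{1}$, where $k=3$ and $n=\dim Z=2$, so that the index $d$ runs over $\{1,2,3\}$. The first point is to produce an explicit admissible degree $\alpha\in N(g_{0},\ldots,g_{t})$. Since the nef cone of $X$ is the positive octant $\{(p,q,r):p,q,r\geq 0\}$ in $\textnormal{Pic}(X)\cong\mathbb{Z}^{3}$, the translate $N(g_{i})=\textnormal{Nef}(X)_{\mathbb{Z}}+(a_{i},b_{i},c_{i})$ consists of all $(p,q,r)$ with $p\geq a_{i}$, $q\geq b_{i}$, $r\geq c_{i}$; hence, with $a=\max_{i}a_{i}$, $b=\max_{i}b_{i}$, $c=\max_{i}c_{i}$, the class $\alpha:=aD_{1}+bD_{2}+cD_{3}$ lies in $\bigcap_{i}N(g_{i})=N(g_{0},\ldots,g_{t})$. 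I would then choose general $f_{1},f_{2},f_{3}\in I(a,b,c)$.

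Next, for each $d\in\{1,2,3\}$ form $J_{d}=((f_{1},\ldots,f_{d}):B^{\infty})$ and $R_{d}=V(J_{d}:I^{\infty})$. By Theorem~\ref{thm:main}, $R_{d}$ is empty or of pure codimension $d$ in $X$, so, using the description of $A^{\ast}(X)$ from Theorem~\ref{thm:Danilov}, we may write
\begin{align*}
[R_{1}] &= b^{(1)}_{1}D_{1}+b^{(1)}_{2}D_{2}+b^{(1)}_{3}D_{3}\in A^{1}(X), \\
[R_{2}] &= b^{(2)}_{1}D_{2}D_{3}+b^{(2)}_{2}D_{1}D_{3}+b^{(2)}_{3}D_{1}D_{2}\in A^{2}(X), \\
[R_{3}] &= b^{(3)}D_{1}D_{2}D_{3}\in A^{3}(X),
\end{align*}
with integer coefficients $b^{(d)}_{i}$, all of which vanish when the corresponding $R_{d}$ is empty.

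Then I would run the recursion of Proposition~\ref{prop:formula}, which here (with $k-n=1$) specializes to
\begin{align*}
s_{0} &= \alpha-[R_{1}], \\
s_{1} &= \alpha^{2}-[R_{2}]-2\alpha s_{0}, \\
s_{2} &= \alpha^{3}-[R_{3}]-3\alpha^{2}s_{0}-3\alpha s_{1},
\end{align*}
and expand everything in the ring $A^{\ast}(X)\cong\mathbb{Z}[D_{1},D_{2},D_{3}]/(D_{1}^{2},D_{2}^{2},D_{3}^{2})$, using $\alpha^{2}=2bc\,D_{2}D_{3}+2ac\,D_{1}D_{3}+2ab\,D_{1}D_{2}$ and $\alpha^{3}=6abc\,D_{1}D_{2}D_{3}$. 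Collecting the coefficients with respect to the bases of $A^{1}(X)$, $A^{2}(X)$ and $A^{3}(X)$ given in Theorem~\ref{thm:Danilov} then yields precisely the stated formulas for $s_{0}$, $s_{1}$ and $s_{2}$.

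The last step is entirely routine binomial bookkeeping, so there is no real obstacle here: the only genuinely new input beyond the general machinery of Section~3 is the identification of the admissible degree $\alpha$, which is immediate once one observes that $\textnormal{Nef}(\mathbb{P}^{1}\times\mathbb{P}^{1}\times\mathbb{P}^{1})$ coincides with the effective cone. If anything requires care, it is only checking that the genericity and pure-dimensionality hypotheses needed to invoke Theorem~\ref{thm:main} hold, but these are guaranteed by that theorem for general $f_{1},f_{2},f_{3}$.
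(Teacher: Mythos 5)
Your proposal is correct and follows essentially the same route as the paper: both apply Proposition~\ref{prop:formula} with $k=3$, $n=2$, $\alpha=aD_1+bD_2+cD_3$, write each $[R_d]$ in the standard basis of $A^d(X)$, and expand the recursion $s_0=\alpha-[R_1]$, $s_1=\alpha^2-[R_2]-2\alpha s_0$, $s_2=\alpha^3-[R_3]-3\alpha^2 s_0-3\alpha s_1$ in $\mathbb{Z}[D_1,D_2,D_3]/(D_1^2,D_2^2,D_3^2)$. Your explicit justification that $(a,b,c)$ lies in $N(g_0,\ldots,g_t)$ via the nef cone being the positive octant is a welcome addition that the paper only remarks on after the proof.
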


\begin{proof}
Let $f_1,f_2,f_3$ be general elements of $I(a,b,c)$. We now find expressions for $[R_d]$ for $d$ from $1$ to $3$.

$\underline{d=1}$. Let $J_1=(f_1:B^{\infty})$ and $R_1=V(J_1:I^{\infty})$. Then $R_1$ is purely $2$-dimensional (or empty) and $$[R_1]=b^{(1)}_1 D_1+b^{(1)}_2 D_2+ b^{(1)}_3 D_3 \in A^1(X).$$ By Proposition \ref{prop:formula} we have,
\begin{eqnarray*}
s_0 & = & aD_1+bD_2+cD_3 -(b^{(1)}_1 D_1+b^{(1)}_2 D_2+ b^{(1)}_3 D_3)\\
	    & = & (a-b^{(1)}_1)D_1+(b-b^{(1)}_2)D_2+(c-b^{(1)}_3)D_3. 
\end{eqnarray*}

$\underline{d=2}$. Let $J_2=((f_1,f_2):B^{\infty})$ and $R_2=V(J_2:I^{\infty})$. Then $R_2$ is purely $1$-dimensional (or empty) and $$[R_2]=b^{(2)}_1 D_2D_3+ b^{(2)}_2 D_1D_3+b^{(2)}_3 D_1D_2 \in A^2(X).$$ By Proposition \ref{prop:formula} we have,

\begin{eqnarray*}
s_1 & = & (aD_1+bD_2+cD_3)^2 -(b^{(2)}_1 D_2D_3+ b^{(2)}_2 D_1D_3+b^{(2)}_3 D_1D_2)\\
 & &-2(aD_1+bD_2+cD_3)s_0\\
	    & = & \bigl(2bb^{(1)}_3+2cb^{(1)}_2-2bc-b^{(2)}_1\bigr)D_2D_3 \\
            &   & + \bigl(2ab^{(1)}_3+2cb^{(1)}_1-2ac-b^{(2)}_2\bigr)D_1D_3 \\
	    &   & +\bigl(2ab^{(1)}_2+2bb^{(1)}_1-2ab-b^{(2)}_3\bigr)D_1D_2.
\end{eqnarray*}

$\underline{d=3}$. Let $J_3=((f_1,f_2,f_3):B^{\infty})$ and $R_3=V(J_3:I^{\infty})$. Then $R_3$ is $0$-dimensional (or empty) and $$[R_3]=b^{(3)} D_1D_2D_3 \in A^3(X).$$ By Proposition \ref{prop:formula} we have,

\begin{eqnarray*}
s_2 & = & (aD_1+bD_2+cD_3)^3-3(aD_1+bD_2+cD_3)^2s_0\\
& &-3(aD_1+bD_2+cD_3)s_1-b^{(3)} D_1D_2D_3\\
& = & 6abc -6bcb^{(1)}_1-6acb^{(1)}_2-6abb^{(1)}_3+3ab^{(2)}_1+3bb^{(2)}_2+3cb^{(2)}_3-b^{(3)}.
\end{eqnarray*}
\end{proof}

Note that it is sufficient to take $\alpha = (\max \{a_{i}\}, \max \{b_{i}\}, \max \{c_{i}\}),$ because of the simple structure of the nef cone.

\begin{example}\label{ex:p1xp1xp1}
Let $Z$ be the subscheme given by the ideal $I=(x_0z_0^2,(y_0+y_1)z_0)$. Observe that we have $n=2$ and $(a,b,c)=(1,1,2)$. In \verb+Macaulay2+ we give the following input: 
\footnotesize
\begin{verbatimtab}
	S = QQ[x0,x1,y0,y1,z0,z1,
		Degrees=>{{1,0,0},{1,0,0},{0,1,0},{0,1,0},
		{0,0,1},{0,0,1}},Heft=>{1,1,1}]
	B = ideal(x0*y0*z0,x0*y0*z1,x0*y1*z0,
		x0*y1*z1,x1*y0*z0,x1*y1*z0,x1*y0*z1,x1*y1*z1)
	I = ideal(x0*z0^2,y0*z0+z0*y1)
	gensI = flatten sort entries gens I
	degI = degrees I
	transDegI = transpose degI
	len = length transDegI
	maxDegs = for i from 0 to len-1 list max transDegI_i
	J = for i from 1 to 3 list sum(gensI,
		g -> g*random(maxDegs-degree(g),S));
	J1 = ideal(J_0)
	J2 = ideal(J_0,J_1)
	J3 = ideal(J_0,J_1,J_2)
	IR1 = saturate(saturate(J1,B),I)
	IR2 = saturate(saturate(J2,B),I)
	IR3 = saturate(saturate(J3,B),I)
	IR = degrees IR1
	IRL = IR_0
	b11 = IRL_0
	b12 = IRL_1
	b13 = IRL_2
	b21 = degree(saturate((IR2+random({1,0,0},S),B)))
	b22 = degree(saturate((IR2+random({0,1,0},S),B)))
	b23 = degree(saturate((IR2+random({0,0,1},S),B)))
	b3 = degree IR3 
\end{verbatimtab}
\normalsize

The output is:
\begin{eqnarray*}
(b^{(1)}_1,b^{(1)}_2,b^{(1)}_3) & = & (1,1,1),\\
(b^{(2)}_1,b^{(2)}_2,b^{(2)}_3) & = & (1,2,1),\\
b^{(3)} & = & 2.
\end{eqnarray*}

We may then compute the Segre class $j_{\ast}s(Z,X)$ directly using Proposition \ref{p1p1p1n2}, yielding:
\begin{eqnarray*}
s_0 & = & D_3,\\
s_1 & = & D_2D_3+D_1D_2,\\
s_2 & = &-5D_1D_2D_3.
\end{eqnarray*}

\end{example}

\subsection{A third example}
We would like to illustrate the algorithm with a final example that would probably be an exhausting task to do by hand. Let $X$ be the toric variety associated to the fan $\Sigma$ given by the ray generators
\begin{displaymath}
v_0=\begin{bmatrix}1\\0\\0 \end{bmatrix}, \quad v_1=\begin{bmatrix}0\\1\\0 \end{bmatrix}, \quad v_2=\begin{bmatrix}0\\0\\1 \end{bmatrix}, \quad v_3=\begin{bmatrix}-1\\-1 \\0 \end{bmatrix}, \quad v_4=\begin{bmatrix}0\\0\\-1 \end{bmatrix},
\end{displaymath}
corresponding to rays $\rho_i, 0 \leq i \leq 4,$ and maximal cones
\begin{align*}
&\sigma_{1}=(\rho_0,\rho_1,\rho_2), \; \sigma_2=(\rho_1,\rho_2,\rho_3), \; \sigma_3=(\rho_0,\rho_2,\rho_3),\\
&\sigma_4=(\rho_0,\rho_1,\rho_4), \; \sigma_{5}=(\rho_1,\rho_3,\rho_4),\;\sigma_6=(\rho_0,\rho_3,\rho_4).
\end{align*}

Let $Z$ be the subscheme of $X$ given by the ideal $I=(x_1x_2,x_3x_4)$ in the coordinate ring of $X$. To compute its Segre class, we use the implementation of the algorithm in \verb+Macaulay2+ and \verb+Sage+, available for download at \linebreak \verb+http://sourceforge.net/projects/toricsegreclass+ together with instructions for use and more examples. In \verb+Macaulay2+ we give the following input:
\footnotesize
\begin{verbatimtab}
	load "toricSegreClass.m2"
	Rho = {{1,0,0},{0,1,0},{0,0,1},{-1,-1,0},{0,0,-1}}
	Sigma = {{0,1,2},{1,2,3},{0,2,3},{0,1,4},{1,3,4},{0,3,4}}
	XT = normalToricVariety(Rho,Sigma)
	deg=matrix{{1,1,0,1,0},{0,0,1,0,1}}
	X=normalToricVariety(rays XT, max XT, WeilToClass => deg)
	S=ring X
	I=ideal(x_1*x_2,x_3*x_4)
	toricSegreClass(X,I)
\end{verbatimtab}
\normalsize

In \verb+Sage+, we include the output from \verb+Macaulay2+ and write:
\footnotesize
\begin{verbatimtab}
	load "toricSegreClass.sage"
	integerLists = [[[2, 0, 0, 0, 0], [1, 1, 0, 0, 0],
			[0, 2, 0, 0, 0], [1, 0, 1, 0, 0],
			[0, 1, 1, 0, 0], [0, 0, 2, 0, 0],
			[1, 0, 0, 1, 0], [0, 1, 0, 1, 0],
			[0, 0, 1, 1, 0], [0, 0, 0, 2, 0],
			[1, 0, 0, 0, 1], [0, 1, 0, 0, 1],
			[0, 0, 1, 0, 1], [0, 0, 0, 1, 1],
			[0, 0, 0, 0, 2]], 
			[[1, 0, 0, 0, 0], [0, 1, 0, 0, 0],
			[0, 0, 1, 0, 0], [0, 0, 0, 1, 0],
			[0, 0, 0, 0, 1]], [[0, 0, 0, 0, 0]]]
	gammaLists =  [[1, 1, 1, 1, 1, 0, 1, 1, 1, 1, 1, 1, 0, 1, 0], 
			[2, 2, 1, 2, 1], [0]]
	degAlpha = [1, 1]
	fan = Fan(cones=[(0,1,2),(1,2,3),(0,2,3),(0,1,4),(1,3,4),(0,3,4)], 
	rays=[(1,0,0),(0,1,0),(0,0,1),(-1,-1,0),(0,0,-1)])
	X=ToricVariety(fan)
	X.inject_variables()
	I=ideal(z1*z2,z3*z4)
	s = toricSegreClass(fan, I, degAlpha, integerLists, gammaLists)
	print s
\end{verbatimtab}
\normalsize

\noindent The output is the cycle class \verb+( 12 | -2, 3 | 0, 0 | 0 )+.

\section{Algorithmic implementation}\label{ALG}
In this section we present the algorithm in pseudo-language. A preliminary implementation in \verb+Sage+ and \verb+Macaulay2+ is available online at \linebreak \verb+http://sourceforge.net/projects/toricsegreclass+. Note that intersection theory for toric varieties hopefully will be implemented in \verb+Macaulay2+ in the near future. At that point, all computations can be done using only \verb+Macaulay2+. At the time of writing, intersection theory for toric varieties is only implemented in \verb+Sage+, while some of the ring-theoretic operations call for \verb+Macaulay2+.

\smallskip
\noindent
{\textbf{Input:}} The fan of a smooth toric variety $X$ and an ideal $I$ 
with non-zero multi-homogeneous generators $g_0,\ldots,g_t$ in the coordinate ring of $X$.

\noindent {\textbf{Output:}} The Segre class of the subscheme of $X$ defined by $I$. 
\smallskip

\noindent {\textbf{Part I - Setup.}}
\begin{enumerate}

\item Let $\Sigma$ be a smooth normal fan of a lattice polytope in $\mathbb{Z}^k$ generated by $r$ rays.
\item Let $X$ be the smooth projective toric variety corresponding to $\Sigma$. 
\item Let $k:=\dim(X)$.
\item Compute $S=\textnormal{Cox}(X)$, the coordinate ring of $X$. 
\item For $i=1$ to $r$, let $x_i$ be the variables of $S$. 
\item For $i=1$ to $r$ let $D_i \in \textnormal{Div}(X)$ denote the corresponding divisor of $x_i$.
\item Let $\nu :=r-k$. For $i=1$ to $r$, store the multidegree of $x_i$,  $\underline{m}_i=\{m_j\}_{j=1}^{\nu}$.
\item Compute the irrelevant ideal $B$ of $X$.
\item Compute $A$, the Chow group of $X,$ and $N,$ the nef cone of $X.$
\end{enumerate}

\noindent {\textbf{Part II - Finding } $[R_d]$ \textbf{for } $k-n \leq d \leq k.$}
\begin{enumerate}
\item Let $Z \subset X$ be the subscheme defined by $I$ and compute $n :=\dim(Z)$.
\item Choose $\alpha$ as the apex of the cone $\textnormal{N}(g_{0},\ldots,g_{t}).$
\item Pick random elements $f_1, \ldots, f_k \in I(\alpha)$.
\item For $d=k-n$ to $k$ do
\begin{enumerate}
\item Let $J_d=((f_1,\ldots, f_d):B^{\infty})$.
\item Compute $I_{R_d}=(J_d:I^{\infty})$.
\item Let $m:=k-d=\textnormal{dim } R_d$, for $R_d$ the subscheme of $X$ defined by $I_{R_d}$.
\item Let $h_m :=\textnormal{rk } A_m(X)$, and let $\omega^{(d)}_1, \ldots, \omega^{(d)}_{h_m}$ denote the generators of $A_m(X)$.
\item Let $[R_d]=\sum_{i=1}^{h_m} b^{(d)}_i \cdot \omega^{(d)}_{i}$. Find $\geq h_m$ linear equations with variables $b_i$ in the following way. 
For all sets of natural numbers $\underline{p}=\{p_1, \ldots, p_{r}\}$ such that $\sum_{i=1}^r p_i=m$ do:
\begin{enumerate}
\item for $i=1, \ldots, r$ let $P_{i,j} \in S, j \in \{1,\ldots,p_{i}\},$ be $p_i$ random polynomials of multidegree $\underline{m}_i$. Compute the length $\gamma_{\underline{p}}$ of the 0-dimensional subscheme determined by $I_{R_d}+\sum_{i=1}^{r}\sum_{j=1}^{p_i} (P_{i,j}).$
\item let $\beta_{i,\underline{p}}$ be the degree of the intersection product $\omega^{(d)}_i \cdot \prod_{j=1}^{r}D_j^{p_j}$ for $i=1$ to $h_m$.
\end{enumerate}
Solve the linear system of equations $\left\{\sum_{i=1}^{h_m}\beta_{i,\underline{p}}\cdot b^{(d)}_i = \gamma_{\underline{p}}\right\}.$
\item Store $[R_d]$.
\end{enumerate}
\end{enumerate}

\noindent {\textbf{Part III - Finding } $j_{\ast}s(Z,X).$}
\begin{enumerate}
\item Consider the list $\alpha$ as a class in $A^1(X),$ through the isomorphism $A^{1}(X) \cong \textnormal{Pic }(X) \cong \mathbb{Z}^{r-k}.$ For $i=0$ to $n$ find $s_i$ by computing
\begin{eqnarray*}
s_{0} & = & \alpha^{k-n} - [R_{k-n}] \\
s_{i} & = & \alpha^{i+k-n} - [R_{i+k-n}] - \sum_{j=0}^{i-1} {i+k-n \choose i-j}\alpha^{i-j}s_{j}, \text{ for all } i \geq 1.
\end{eqnarray*}
\item Return $s := \sum_{i=0}^{n}s_i,$ the Segre class $j_{\ast}s(Z,X).$
\end{enumerate}


\bibliographystyle{hacm}
\bibliography{bib2}

\end{document}